\newlength{\modCD@rowsep}
\renewenvironment{CD}[1][]
{\modCD@rowsep=20\ex@ 
  \setkeys{modCD}{#1}%
  \CDat
  \bgroup\relax\let\ampersand@&\iffalse}\fi
\newtheorem{theorem}{Theorem}[section]
\newtheorem{lemma}[theorem]{Lemma}
\theoremstyle{definition}
\newtheorem{definition}[theorem]{Definition}
\newtheorem{example}[theorem]{Example}
\newtheorem{proposition}[theorem]{Proposition}
\theoremstyle{remark}
\newtheorem{remark}[theorem]{Remark}
\numberwithin{equation}{section}
\begin{document}

\title{Quasi-Elliptic Cohomology I.}



\author{Zhen Huan}

\address{Zhen Huan, Department of Mathematics,
Sun Yat-sen University, Guangzhou, 510275 China} \curraddr{} \email{huanzhen@mail.sysu.edu.cn}
\thanks{The author was partially supported by NSF grant DMS-1406121.}


\subjclass[2010]{Primary 55}

\date{April, 2018}

\begin{abstract}
Quasi-elliptic cohomology is a variant of
elliptic cohomology theories. It is the orbifold K-theory of a space of constant 
loops. For global quotient orbifolds, it can be expressed in terms
of equivariant K-theories. Thus, the constructions on it can be made in a neat way. This theory reflects the geometric nature of the Tate curve.
In this paper we provide a systematic introduction
of its construction and definition.
\end{abstract}

\maketitle 

\section{Introduction}

An elliptic cohomology theory is an even periodic multiplicative
generalized cohomology theory whose associated formal group is the
formal completion of an elliptic curve. The elliptic cohomology theories form a sheaf of cohomology theories over the moduli stack of elliptic curves
$\mathcal{M}_{ell}$. Tate K-theory over Spec$\mathbb{Z}((q))$ is obtained when we restrict it to a punctured completed neighborhood of the cusp at $\infty$, i.e. the Tate
curve $Tate(q)$ over Spec$\mathbb{Z}((q))$  [Section 2.6,
\cite{AHS}]. The relation between Tate K-theory and string theory is better
understood than most known elliptic cohomology theories. In addition, Tate K-theory has the closest ties to Witten's original insight that the elliptic cohomology of  a space $X$ is related to the
$\mathbb{T}-$equivariant K-theory of the free loop space
$LX=\mathbb{C}^{\infty}(S^1, X)$ with the circle $\mathbb{T}$
acting on $LX$ by rotating loops. Ganter gave a careful interpretation in Section 2, \cite{Gan07} of this statement that the
definition of $G-$equivariant Tate K-theory for finite groups $G$
is modelled on the loop space of a global quotient orbifold.

Other than the theory over Spec$\mathbb{Z}((q))$, we can define variants of Tate K-theory over Spec$\mathbb{Z}[q]$ and Spec$\mathbb{Z}[q^{\pm}]$ respectively. The theory over Spec$\mathbb{Z}[q^{\pm}]$
is of especial interest.
Inverting $q$ allows us to define a sufficiently non-naive equivariant cohomology theory and to interpret some constructions more easily in terms of extensions of groups over
the circle. The resulting cohomology theory is called quasi-elliptic cohomology.
Its relation with Tate K-theory is \begin{equation}QEll^*_G(X)\otimes_{\mathbb{Z}[q^{\pm}]}\mathbb{Z}((q))=(K^*_{Tate})_G(X)
\label{tateqellequiv}\end{equation} which also reflects the geometric nature of the Tate curve. As discussed in Remark \ref{KMgroup},
$QEll^*_{\mathbb{T}}(\mbox{pt})$ has a direct interpretation in terms of the Katz-Mazur group scheme $T$ [Section 8.7, \cite{KM85}].
The idea of quasi-elliptic cohomology is  motivated by Ganter's
construction of Tate K-theory  \cite{Dev96}.
It is not an elliptic cohomology  but a more robust and algebraically simpler treatment of
Tate K-theory. This new theory can be interpreted in a neat form
by equivariant K-theories. Some
formulations in it can be generalized to equivariant cohomology
theories other than Tate K-theory.

Via quasi-elliptic cohomology theory, we show in this paper that $G-$equivariant Tate K-theory for any compact Lie group $G$
is given by the $\mathbb{T}-$equivariant $K-$theory of the ghost loops [Section \ref{ghost}], or constant loops [Section \ref{orbifoldloop}] inside the free loop space $LX$. Moreover, as shown in Section \ref{busb}, quasi-elliptic cohomology can be defined not only for $G-$spaces but also for orbifolds. Applying the same idea,
we obtain a loop construction for orbifold Tate K-theory via  orbifold quasi-elliptic cohomology theory.

This paper aims to provide a reference for this elegant theory and a systematic introduction of its construction and definition.
In Section \ref{loopmodel}, for any
compact Lie group $G$, we construct $G-$equivariant quasi-elliptic cohomology from a loop space via bibundles. Thus, we in fact give a construction by loop space of $G-$equivariant Tate K-theory for compact Lie groups $G$. In Section 2 \cite{HuanPower} we showed the construction when $G$ is a finite group, which, as shown in Section \ref{loopmodel},
can be generalized to the case when $G$ is a compact Lie group. We discuss the subtle points of this generalization in Section \ref{orbifoldloop}.
In Section \ref{conqec}
we give the definition of quasi-elliptic cohomology $QEll_G^*(-)$ with $G$ a compact Lie group, set up the theory and show its properties. We gave a different definition
of $QEll_G^*(-)$ with $G$ a finite group in Definition 3.10, \cite{HuanPower}, which is equivalent to the definition in this paper.  In Section \ref{orbifoldquasibeforepower}, we present the construction of orbifold
quasi-elliptic cohomology via the loop space of bibundles. Moreover, we give another construction motivated by Ganter's construction of orbifold Tate K-theory in \cite{Gan13}. The two constructions of orbifold
quasi-elliptic cohomology are equivalent.

In addition, I would like to introduce several other research progresses and the contribution of quasi-elliptic cohomology to the study of Tate K-theory and Tate curve.

Morava E-theories have many properties that reflects other homotopy theories. They serve as motivating examples for the research
on other cohomology theories. A classification of the level structure of its formal group is given in \cite{Isogeny}.
Strickland proved in \cite{Str98} that the Morava
$E-$theory of the symmetric group $\Sigma_n$ modulo a certain
transfer ideal classifies the power subgroups of rank $n$ of its
formal group. Stapleton proved in \cite{SSST} this result for generalized Morava E-theory via transchromatic character theory \cite{STGCM} \cite{STTCM}.
In each case, the power operation serves as a bridge connecting the homotopy theory and its formal group.
It is conjectured that we have classification theorems of the geometric structures of each elliptic curve in the same form.

In \cite{HuanPower} we construct a power operation of quasi-elliptic cohomology via explicit formulas that interwine
the power operation in K-theory and natural symmetries of the free
loop space. It is closely related to the stringy power operation
of Tate K-theory in \cite{Gan07}. One advantage of it over the latter operation is that its construction can be generalized to a family of other equivariant
cohomology theories.
Via it we show in \cite{HuanPower}   that the Tate K-theory of symmetric groups modulo a certain transfer
ideal, $K_{Tate}(\mbox{pt}/\!\!/\Sigma_N)/I^{Tate}_{tr}$, classifies finite subgroups of the Tate curve.
Applying the same idea and method,  we prove that, for the Tate K-theory of any finite abelian group $A$ modulo a certain transfer ideal,
$K_{Tate}(\mbox{pt}/\!\!/ A)/I_{tr}^A$, classifies $A-$Level structure of the Tate curve. This result will appear in a coming paper.

Ginzburg, Kapranov and Vasserot gave a definition of equivariant elliptic cohomology in \cite{GKV} and conjectured that any
elliptic curve $A$ gives rise to a unique equivariant elliptic
cohomology theory, natural in $A$. In his thesis \cite{Gepnerthesis}, Gepner presented a
construction of the equivariant elliptic cohomology that satisfies
a derived version of the Ginzburg-Kapranov-Vasserot axioms. We are interested in
how to give an explicit construction of an
orthogonal $G-$spectrum of quasi-elliptic cohomology and Tate K-theory.
In \cite{HuanSpec} we formulate a new category of orthogonal $G-$spectra and construct explicitly an orthogonal $G-$spectrum of quasi-elliptic cohomology in it. The
idea of the  construction can be applied to a family of equivariant cohomology theories, including Tate K-theory and generalized
Morava E-theories. Moreover, this construction provides a functor from the category
of global spectra to the category of orthogonal $G-$spectra.

The idea of global orthogonal spectra was first inspired in
Greenlees and May \cite{GP}.  Many classical theories, equivariant
stable homotopy theory,  equivariant bordism, equivariant
K-theory, etc, naturally exist not only for a single group but a
specific family of groups in a uniform way. Several models of
global homotopy theories have been established, including that by
Schwede \cite{SS}, Gepner \cite{Gepnerthesis} and Bohmann \cite{BG}.
In a conversation, Ganter indicated
that quasi-elliptic cohomology has better chances than Grojnowski equivariant elliptic
cohomology theory to be put together naturally in a uniform way
and made into an ultra-commutative global cohomology theory in the
sense of Schwede \cite{SS}.

We are establishing in a coming paper a more flexible global homotopy theory that is equivalent to Schwede's global homotopy theory.
Quasi-elliptic chomomology, Tate K-theory and generalized Morava E-theories can fit into the new global theory. We are still working on how
effective it is to judge whether a cohomology theory, especially an
elliptic cohomology theory, can be globalized.
The idea of the construction of the new global homotopy theory has been partially shown in Chapter 6 and 7 of the author's PhD thesis \cite{Huanthesis}. In Theorem 7.2.3 \cite{Huanthesis}
we show quasi-elliptic cohomology can be globalized in the new theory.

\subsection{Acknowledgement}
I would like to thank Charles Rezk. He is the one who set up the theory and a very inspiring advisor.
The orginal references for this topic are his unpublished manuscripts \cite{Rez11} and \cite{Rez16}. Most of this work was directed
by him. I would like to thank Nora Ganter who motivated quasi-elliptic cohomology theory.
I would also like many others for sharing their thoughts with me on my research
on quasi-elliptic cohomology, including Matthew Ando, Randy McCarthy, Martin Frankland, David Gepner, Stefan Schwede, Nathaniel Stapleton, Vesna Stojanoska. I would like to thank those friends spending time reading this topic and discussing with me, including Meng Guo, Matthew James Spong, Guozhen Wang, Chenchang Zhu.
Lastly, I would like to thank the referee for helpful comments on the paper.

\section{Models for loop spaces}\label{loopmodel}

To understand $QEll^*_G(X)$, it is essential to understand the
orbifold loop space. In this section, we will describe several
models for the loop space of $X/\!\!/G$. Lerman
discussed thoroughly in Section 3, \cite{LerStack} that the strict
2-category of Lie groupoids can be embedded into a weak 2-category
whose objects are Lie groupoids, 1-morphisms are bibundles and
2-morphisms equivariant diffeomorphisms between bibundles. Thus,
the free loop space of an orbifold $M$ is 
the category of bibundles from the trivial groupoid
$S^1/\!\!/\ast$ to the Lie groupoid $M$. In Definition
\ref{loopspacemorphism} we discuss $Loop_1(X/\!\!/G)$ and
introduce another model $Loop_2(X/\!\!/G)$ in Definition
\ref{loopspace3}.

The groupoid structure of $Loop_1(X/\!\!/G)$ generalizes $Map(S^1,
X)/\!\!/G$, which is a subgroupoid of it. Other than the
$G-$action, we also
consider the rotation by the circle group $\mathbb{T}$ on the objects 
and form the  groupoids $Loop_1^{ext}(X/\!\!/G)$ and $Loop_2^{ext}(X/\!\!/G)$. The later one contains all the information
of $Loop_1^{ext}(X/\!\!/G)$.


We also construct a loop space  $L_{orb}
(X/\!\!/G)$ by adding rotations to the orbifold loop space that Ganter used to define equivariant Tate K-theory in \cite{Gan07}.
It is a subgroupoid of $Loop_2^{ext}(X/\!\!/G)$.
The key groupoid $\Lambda(X/\!\!/G)$ in the construction of
quasi-elliptic cohomology is the full subgroupoid of $L_{orb}
(X/\!\!/G)$ consisting of the constant loops.  In order to unravel
the relevant notations in the construction of $QEll^*_G(X)$, we
study the orbifold loop space 
in Section \ref{orbifoldloop} and Section \ref{ghost}.

Moreover, we introduce the ghost loops $GhLoop(X/\!\!/G)$,  which is a subgroupoid of $Loop_1^{ext}(X/\!\!/G)$.
It is the third model of loop spaces from which we can construct quasi-elliptic cohomology. It has many good features that the other
three models don't have and is itself a model worth studying.



In Section \ref{orbifoldspre} we define $Loop_1(X/\!\!/G)$ and $Loop^{ext}_1(X/\!\!/G)$. In Section \ref{freereview} we recall the free loop space.
In
Section \ref{orbifoldloop} we interpret the enlarged groupoid
$Loop^{ext}_1(X/\!\!/G)$ and introduce the groupoid $\Lambda(X/\!\!/G)$ of constant loops, from which we construct quasi-elliptic cohomology.
In Section \ref{ghost} we present the
model of ghost loops.


\subsection{Bibundles}\label{orbifoldspre}

A standard reference for groupoids and bibundles is Section 2 and
3, \cite{LerStack}. For each pair of Lie groupoids $\mathbb{H}$
and $\mathbb{G}$,  the bibundles from $\mathbb{H}$ to
$\mathbb{G}$ are defined in Definition 3.25, \cite{LerStack}. The
category $Bibun(\mathbb{H}, \mathbb{G})$ has
bibundles from $\mathbb{H}$ to $\mathbb{G}$ as the objects and bundle maps as the morphisms.

The first question is how to define a "loop". Here we consider bibundles, i.e. the 1-morphisms in a weak 2-category
of Lie groupoids defined in Section 3, \cite{LerStack}

For any manifold $X$, let $Man_X$ denote the category of manifolds
over $X$, that is, the category whose objects are manifolds $Y$
equipped with a smooth map $Y\longrightarrow X$, and whose
morphisms are smooth maps $Y\longrightarrow Y'$ making the
following triangle commute. $$\xymatrix{Y\ar[r]\ar[d] &Y'\ar[dl]\\
X &}$$
A bibundle from $\mathbb{G}$ to $\mathbb{H}$ consists of a left principal $\mathbb{G}-$bundle $P$ over $H_0$ 
and a right action of $\mathbb{H}$
on $P$ via a $\mathbb{G}-$invariant map. 
The actions of $\mathbb{G}$ and $\mathbb{H}$ commute.
Below we give the definition of bibunldes, which unravels Definition 3.25, \cite{LerStack}.
\begin{definition}
Let $\mathbb{G}$ and $\mathbb{H}$ be Lie groupoids. A (left
principal) bibundle from $\mathbb{H}$ to $\mathbb{G}$ is a smooth
manifold $P$ together with \\1. A map $\tau: P\longrightarrow
\mathbb{G}_0$, and a surjective submersion $\sigma:
P\longrightarrow \mathbb{H}_0$. \\2. Action maps in
$Man_{G_0\times H_0}$ \begin{align*}
\mathbb{G}_1\,\strut_s\!\times\!\strut_{\tau}\, P &\longrightarrow
P \\ P\,\strut_{\sigma}\!\times\!\strut_t\, \mathbb{H}_1
&\longrightarrow P\end{align*} which we denote on elements as
$(g,p)\mapsto g\cdot p$ and $(p, h)\mapsto p\cdot h$, such that
\\1. $g_1\cdot (g_2\cdot p)= (g_1g_2)\cdot p$ for all $(g_1, g_2,
p)\in \mathbb{G}_1\,\strut_s\!\times\!\strut_t\,
\mathbb{G}_1\,\strut_s\!\times\!\strut_{\tau}\,  P$;
\\2. $(p\cdot h_1)\cdot h_2= p\cdot (h_1 h_2)$ for all $(p, h_1, h_2)\in P\,\strut_{\sigma}\!\times\!\strut_t\,  \mathbb{H}_1 \,\strut_s\!\times\!\strut_t\,
\mathbb{H}_1$; \\ 3. $p\cdot u_H(\sigma(p))=p$ and
$u_G(\tau(p))\cdot p = p$ for all $p\in P$. \\ 4. $g\cdot (p\cdot
h)= (g\cdot p)\cdot h$ for all $(g, p, h)\in
\mathbb{G}_1\,\strut_s\!\times\!\strut_{\tau}\,
P\,\strut_{\sigma}\!\times\!\strut_t\, \mathbb{H}_1$.
\\5. The map \begin{align*}\mathbb{G}_1\,\strut_s\!\times\!\strut_{\tau}\,
P&\longrightarrow P\,\strut_{\sigma}\!\times\!\strut_{\sigma}\,  P
\\ (g, p)&\mapsto (g\cdot p, p)\end{align*} is an isomorphism.

\label{bibundle}

\end{definition}

\begin{definition}
A bibundle map is a map $P\longrightarrow P'$ over
$\mathbb{H}_0\times \mathbb{G}_0$ which commutes with the
$\mathbb{G}-$ and $\mathbb{H}-$actions, i.e. the following
diagrams commute.
$$\begin{CD}\mathbb{G}_1\,\strut_s\!\times\!\strut_{\tau}\,  P
@>>>
P  \\ @VVV @VVV \\
\mathbb{G}_1\,\strut_s\!\times\!\strut_{\tau}\, P' @>>> P'
\end{CD}  \quad \begin{CD} P \,\strut_{\sigma}\!\times\!\strut_{t}\, \mathbb{H}_1 @>>> P \\ @VVV @VVV\\
P' \,\strut_{\sigma}\!\times\!\strut_{t}\,
\mathbb{H}_1 @>>>P'
\end{CD}$$

\label{bibundlemap}
\end{definition}

For each pair of Lie groupoids $\mathbb{H}$ and $\mathbb{G}$,  we
have a category $Bibun(\mathbb{H}, \mathbb{G})$ with as objects
bibundles from $\mathbb{H}$ to $\mathbb{G}$ and as morphisms the
bundle maps. 
The category of smooth functors from $\mathbb{H}$ to $\mathbb{G}$
is a subcategory of $Bibun(\mathbb{H}, \mathbb{G})$.

\begin{example}[$Bibun(S^1/\!\!/\ast, \ast/\!\!/G)$] According to the definition, a bibundle from $S^1/\!\!/\ast$
to $\ast/\!\!/G$ with $G$ a Lie group is a smooth manifold $P$
together with two maps $\pi: P\longrightarrow S^1$ a smooth
principal $G-$bundle and the constant map $r: P\longrightarrow
\ast$.  So a bibundle in this case is equivalent to a smooth
principal $G-$bundle over $S^1$. The morphisms in
$Bibun(S^1/\!\!/\ast, \ast/\!\!/G)$ are bundle isomorphisms.
\label{babyloop1}
\end{example}

\begin{definition}[$Loop_1(X/\!\!/G)$]

Let $G$ be a Lie group acting smoothly on a manifold $X$. We use
$Loop_1(X/\!\!/G)$ to denote the category $Bibun(S^1/\!\!/\ast,
X/\!\!/G)$, which generalizes Example \ref{babyloop1}. Each object
consists of a smooth manifold $P$ and two structure maps
$P\buildrel{\pi}\over\longrightarrow S^1$ a smooth principal
$G-$bundle  and $f: P\longrightarrow X$ a $G-$equivariant map. We
use the same symbol $P$ to denote both the object and the smooth
manifold when there is no confusion. A morphism 
is a $G-$bundle map $\alpha: P\longrightarrow P'$ making the
diagram below commute.
$$\xymatrix{S^1
&P \ar[l]_{\pi}\ar[d]^{\alpha}\ar[r]^{f} & X \\
&P' \ar[lu]^{\pi'}\ar[ru]_{f'} &}$$ Thus, the morphisms in
$Loop_1(X/\!\!/G)$ from $P$ to $P'$ are bundle isomorphisms.



\label{loopspacemorphism}\end{definition}

Only the $G-$action on $X$ is considered in $Loop_1(X/\!\!/G)$. We
add the rotations by adding more morphisms into the groupoid.

\begin{definition}[$Loop^{ext}_1(X/\!\!/G)$]\label{loopext3space} 
Let $Loop^{ext}_1(X/\!\!/G)$ denote the groupoid with the same
objects as $Loop_1(X/\!\!/G)$. Each morphism
consists of the pair $(t, \alpha)$ where $t\in\mathbb{T}$ is a
rotation and $\alpha$ is a $G-$bundle map. They make the diagram
below commute.
$$\xymatrix{S^1\ar[d]_{t}
&P \ar[l]_{\pi}\ar[d]_{\alpha}\ar[r]^{f} & X \\S^1 &P'
\ar[l]^{\pi'}\ar[ru]_{f'} &}$$

The groupoid $Loop_1(X/\!\!/G)$ is a subgroupoid of
$Loop^{ext}_1(X/\!\!/G)$.
\end{definition}

\subsection{Free loop space} \label{freereview}

In this section we recall the free loop space of a $G-$space and discuss the actions on it by $Aut(S^1)$
and the loop group $LG$. We will also show its relation with physics.

For any space $X$, we have the free loop space of $X$
\begin{equation}LX:=\mathbb{C}^{\infty}(S^1, X). \end{equation} It
comes with an evident action by the circle group
$\mathbb{T}=\mathbb{R}/\mathbb{Z}$ defined by rotating the circle
\begin{equation}t\cdot \gamma:= (s\mapsto \gamma (s+t)), \mbox{
}t\in S^1, \mbox{   } \gamma\in LX.\end{equation}

Let $G$ be a compact Lie group. Suppose $X$ is a right
$G$-space. The free loop space $LX$  is  equipped with an action
by the loop group $LG$   \begin{equation}\delta\cdot
\gamma:=(s\mapsto \delta(s)\cdot \gamma(s)),\mbox{ for any } s\in
S^1, \mbox{   }\delta\in LX, \mbox{ }\mbox{
    }\gamma\in LG.\end{equation}

Combining the action by group of automorphisms $Aut(S^1)$ on the
circle and the action by $LG$, we get an action by the extended
loop group $\Lambda G$ on $LX$. $\Lambda G:=LG\rtimes\mathbb{T}$
is a subgroup of
\begin{equation} LG\rtimes Aut(S^1), \mbox{             }(\gamma, \phi)\cdot (\gamma', \phi'):= (s\mapsto \gamma(s) \gamma'(\phi^{-1}(s)), \phi\circ\phi')\end{equation}
with $\mathbb{T}$ identified with the group of rotations on $S^1$.
$\Lambda G$ acts on $LX$ by
\begin{equation}\delta \cdot(\gamma, \phi):= (t\mapsto
\delta(\phi(t))\cdot\gamma(\phi(t))), \mbox{  for any }(\gamma,
\phi)\in \Lambda G,\mbox { and    }\delta \in
LX.\label{loop2action}\end{equation} It's straightforward to check
(\ref{loop2action}) is a well-defined group action.

Let $G^{tors}$ denote the set of torsion elements in $G$.  Let $g\in G^{tors}$. 
Define $L_{g}G$ to be the twisted loop group
\begin{equation}
\{\gamma: \mathbb{R}\longrightarrow
G|\gamma(s+1)=g^{-1}\gamma(s)g\}.
\end{equation}
The multiplication of it is defined by
\begin{equation}(\delta\cdot\delta')(t)=\delta(t)\delta'(t)\mbox{,     for      any     }\delta, \delta'\in
L_{g}G,  \mbox{    and     }t\in\mathbb{R}.\end{equation} The
identity element $e$ is the constant map sending all the real
numbers to the identity element of $G$. Similar to $\Lambda G$, we
can define $L_{g}G\rtimes\mathbb{T}$ whose multiplication is
define by
\begin{equation} (\gamma, t)\cdot (\gamma', t'):= (s\mapsto \gamma(s) \gamma'(s+t), t+t').\label{lkgtmulti}\end{equation}

The set of constant maps $\mathbb{R}\longrightarrow G$ in
$L_{g}G$ is a subgroup of it, i.e. the centralizer $C_G(g)$.

\bigskip

Before we go on, we discuss the physical meaning of the twisted loop group. Recall that the gauge group of a
principal bundle is defined to be the group of its vertical
automorphisms. The readers may refer \cite{MV} for more details on
gauge groups. For a $G-$bundle $P\longrightarrow S^1$, let $L_P G$
denote its gauge group.

We have the well-known facts below. 

\begin{lemma}The principal $G-$bundles over $S^1$ are classified up to isomorphism by
homotopy classes $$[S^1, BG]\cong \pi_0G/\mbox{conj}.$$ Up to
isomorphism every principal $G-$bundle over $S^1$ is isomorphic to
one of the forms $P_{\sigma}\longrightarrow S^1$ with $\sigma\in
G$ and
$$P_{\sigma}:=\mathbb{R}\times G/ (s+1, g)\sim(s, \sigma g).$$ A complete collection of isomorphism classes is given by  a
choice of representatives for each conjugacy class of
$\pi_0G$.\label{ghostlem1}\end{lemma}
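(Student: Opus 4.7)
The plan is to establish the two assertions of the lemma in parallel: first the abstract count $[S^1, BG] \cong \pi_0 G/\mathrm{conj}$ via classifying-space theory, and then the concrete presentation by $P_\sigma$ via descent from the universal cover. For the abstract part, I would invoke the standard representability of principal $G$-bundles: for any paracompact $X$, isomorphism classes of principal $G$-bundles over $X$ are in bijection with free (unbased) homotopy classes $[X, BG]$. Specializing to $X = S^1$, unbased homotopy classes of loops in $BG$ are the conjugacy classes of $\pi_1(BG)$. The long exact sequence of the universal fibration $G \to EG \to BG$ together with contractibility of $EG$ gives $\pi_1(BG) \cong \pi_0(G)$, and the standard conjugation action on $\pi_1$ matches ordinary conjugation on $\pi_0(G)$ as a group. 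Combining these identifications yields $[S^1, BG] \cong \pi_0(G)/\mathrm{conj}$.

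For the explicit model, I would first check directly that $P_\sigma = \mathbb{R} \times G /\!\sim$ is a smooth principal $G$-bundle over $S^1 = \mathbb{R}/\mathbb{Z}$ with right action $[s, g] \cdot h = [s, gh]$ (freeness and properness of the action, local triviality over arcs in $S^1$). Then, for an arbitrary principal $G$-bundle $P \to S^1$, I would pull back along the universal cover $p \colon \mathbb{R} \to S^1$; since $\mathbb{R}$ is contractible, $p^*P$ is trivializable and we may fix an isomorphism $p^*P \cong \mathbb{R} \times G$. The deck transformation $s \mapsto s+1$ lifts to a $G$-equivariant automorphism of $p^*P$ covering the shift, which in this trivialization has the form $(s, g) \mapsto (s+1, f(s)g)$ for some smooth $f \colon \mathbb{R} \to G$. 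A gauge change by $\gamma \colon \mathbb{R} \to G$ replaces $f(s)$ by $\gamma(s+1)^{-1} f(s) \gamma(s)$, and a suitable choice of $\gamma$ reduces $f$ to a constant $\sigma \in G$, identifying $P \cong P_\sigma$.

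Finally, $P_\sigma \cong P_{\sigma'}$ iff $\sigma$ and $\sigma'$ represent the same conjugacy class in $\pi_0(G)$: the easy direction follows from the explicit bundle map $[s, g] \mapsto [s, hg]$ when $\sigma' = h\sigma h^{-1}$, together with a connecting path within a component of $G$ to handle the general case; the converse is forced by the abstract classification already established. The main obstacle is the gauge-fixing step: verifying that the cocycle $f \colon \mathbb{R} \to G$ can always be reduced to a constant, and that the resulting constant is well-defined up to exactly the conjugation action on $\pi_0(G)$ and nothing finer. This amounts to an ODE-style argument in $G$ paired with careful tracking of how the connected components of the gauge group $C^\infty(\mathbb{R}, G)$ act through the formula $f(s) \mapsto \gamma(s+1)^{-1} f(s) \gamma(s)$.
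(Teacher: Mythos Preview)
Your proposal is correct and follows the standard route for this classical fact. However, you should be aware that the paper does not actually supply a proof of this lemma: it is introduced with the phrase ``We have the well-known facts below'' and is stated without argument, serving purely as background for the subsequent discussion of gauge groups and the model $P_\sigma$. So there is no proof in the paper to compare against.

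That said, your sketch is the expected one. A couple of minor remarks on the gauge-fixing step you flag as the main obstacle. The cleanest way to carry it out is not to chase the recursion $\gamma(s+1)=f(s)\gamma(s)\sigma^{-1}$ directly, but rather to observe that a trivialization of $p^*P$ amounts to a global section, and any two global sections of a principal $G$-bundle over $\mathbb{R}$ differ by a map $\mathbb{R}\to G$; choosing the section to be the horizontal lift of a connection (which exists since $G$ is a Lie group) makes the deck-translation act by a \emph{constant} holonomy element $\sigma$ automatically. Alternatively, one can use a two-arc clutching description of $P\to S^1$: the transition function lives in $\pi_0 G\times\pi_0 G$, one factor can be gauged to $e$, and the remaining factor is $\sigma$ up to conjugation. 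Either of these makes the ``ODE-style argument'' you allude to precise with minimal pain, and confirms that the residual ambiguity is exactly conjugation in $\pi_0 G$.
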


For the gauge group $L_{P_{\sigma}} G$ of the bundle $P_{\sigma}$,
we have the conclusion.

\begin{lemma}For the bundle
$P_{\sigma}\longrightarrow S^1$, $L_{P_{\sigma}}G$ is isomorphic
to the twisted loop group $L_{\sigma}G$.
\label{ghostlem2}\end{lemma}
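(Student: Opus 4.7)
My plan is to use the standard identification of the gauge group of a principal $G$-bundle $P \to S^1$ with the space of $G$-equivariant maps $P \to G$, where $G$ acts on itself by conjugation. A vertical bundle automorphism $\phi: P_\sigma \to P_\sigma$ corresponds uniquely to the map $f: P_\sigma \to G$ defined by $\phi(p) = p \cdot f(p)$, and the equivariance condition $\phi(p \cdot h) = \phi(p) \cdot h$ forces $f(p \cdot h) = h^{-1} f(p) h$. I would take this as the starting point.

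Next I would pull $f$ back along the quotient map $\mathbb{R} \times G \to P_\sigma$ to get $\tilde f: \mathbb{R} \times G \to G$. Right $G$-equivariance with respect to the conjugation action immediately gives
\[
\tilde f(s, g) = g^{-1}\, \tilde f(s, e)\, g,
\]
so $\tilde f$ is determined by the single function $\gamma(s) := \tilde f(s, e): \mathbb{R} \to G$. The descent condition along the equivalence relation $(s+1, g) \sim (s, \sigma g)$ then reads $\tilde f(s+1, e) = \tilde f(s, \sigma)$, which when combined with the formula above yields precisely the twisting identity
\[
\gamma(s+1) = \sigma^{-1} \gamma(s)\, \sigma,
\]
so $\gamma \in L_\sigma G$. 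Conversely, any $\gamma \in L_\sigma G$ defines such a $\tilde f$, and hence a well-defined gauge transformation of $P_\sigma$, so the assignment $\phi \mapsto \gamma$ is a bijection.

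Finally I would verify that this bijection is a group homomorphism. For two gauge transformations $\phi_i(p) = p \cdot f_i(p)$, composition gives
\[
(\phi_1 \circ \phi_2)(p) = p \cdot f_2(p) \cdot f_1\bigl(p \cdot f_2(p)\bigr) = p \cdot f_2(p) \cdot f_2(p)^{-1} f_1(p)\, f_2(p) = p \cdot f_1(p)\, f_2(p),
\]
using conjugation-equivariance of $f_1$ in the middle step. Thus composition in $L_{P_\sigma}G$ corresponds to pointwise multiplication of the associated functions $\gamma_i$, which is the group operation on $L_\sigma G$.

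I do not expect any serious obstacle: the argument is a direct unwinding of definitions, with the only care needed being the bookkeeping of left versus right conjugation conventions when computing $f(p\cdot h)$ and when descending from $\mathbb{R}\times G$ to the quotient $P_\sigma$. Getting the twisting on the correct side, i.e.\ $\gamma(s+1) = \sigma^{-1}\gamma(s)\sigma$ rather than its inverse, is the one place where a sign-type error is easy to make.
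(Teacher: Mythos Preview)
Your proposal is correct and takes essentially the same approach as the paper: both identify a vertical automorphism of $P_\sigma$ with a function $\gamma:\mathbb{R}\to G$ and read off the twisting condition $\gamma(s+1)=\sigma^{-1}\gamma(s)\sigma$ from the identification $(s+1,g)\sim(s,\sigma g)$. The only cosmetic difference is that the paper parametrizes an automorphism directly as $[s,g]\mapsto[s,\gamma_f(s)g]$ (left multiplication, automatically right-$G$-equivariant), whereas you first write $\phi(p)=p\cdot f(p)$ and then use conjugation-equivariance of $f$ to reduce to the same $\gamma$; your version also spells out the group-homomorphism check more explicitly than the paper does.
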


\begin{proof}


Each automorphism $f$ of an object
$S^1\buildrel{\pi}\over\leftarrow
P_{\sigma}\buildrel{\widetilde{\delta}}\over\rightarrow X$ in
$Loop^{ext}_1(X/\!\!/G)$ has the form
\begin{equation}\begin{CD}P' @>{[s, g]\mapsto [s, \gamma_f(s)g]}>> P \\ @VVV @VVV \\ S^1 @>{=}>> S^1
\end{CD}\end{equation} for some $\gamma_f: \mathbb{R}\longrightarrow
G$.  The morphism is well-defined if and only if
$\gamma_f(s+1)=\sigma^{-1}\gamma_f(s)\sigma$.

So we get a well-defined map $$F: L_{P_{\sigma}}G\longrightarrow
L_{\sigma}G\mbox{,    } f\mapsto\gamma_f.$$  It's a bijection.
Moreover, by the property of group action, $F$ sends the identity
map to the constant map $\mathbb{R}\longrightarrow G\mbox{,   }
s\mapsto e$, which is the trivial element in $L_{\sigma}G$, and
for two automorphisms $f_1$ and $f_2$ at the object, $F(f_1\circ
f_2)= \gamma_{f_1}\cdot \gamma_{f_2}$. So $L_{P_{\sigma}}G$ is
isomorphic to $ L_{\sigma}G$.

\end{proof}

\subsection{Orbifold Loop Space}\label{orbifoldloop}

In this section, we present the loop space $Loop_2(X/\!\!/G)$. Based on these models, we construct the groupoid $Loop^{ext}_2(X/\!\!/G)$ and show its relation to
$Loop^{ext}_1(X/\!\!/G)$, the model by bibundles. 
These models, however, are not good enough to study. Instead, 
we consider a subgroupoid $\Lambda(X/\!\!/G)$ of $Loop^{ext}_2(X/\!\!/G)$ consisting of constant loops. It can be constructed from the orbifold loop space in Section 2.1 \cite{Gan07} that
Ganter used to formulate Tate K-theory and show
its relation with loop spaces.

Let $G$ be a Lie group acting smoothly on a manifold $X$.

\begin{definition}[$Loop_2(X/\!\!/G)$]\label{loopspace3}
Let
$Loop_2(X/\!\!/G)$ denote the groupoid whose objects are $(\sigma,
\gamma)$ with $\sigma\in G$ and $\gamma: \mathbb{R}\longrightarrow
X$
a continuous map such that $\gamma(s+1)= \gamma(s)\cdot\sigma$, for any $s\in\mathbb{R}$. 
A morphism $\alpha: (\sigma, \gamma)\longrightarrow (\sigma',
\gamma')$ is a continuous map $\alpha: \mathbb{R}\longrightarrow
G$ satisfying $\gamma'(s)= \gamma(s)\alpha(s)$. Note that
$\alpha(s)\sigma'=\sigma\alpha(s+1)$, for any $s\in\mathbb{R}$.
\end{definition}
The objects of $Loop_2(X/\!\!/G)$ can be identified
with the space
$$\coprod\limits_{g\in G}\mathcal{L}{_g} X$$ where
\begin{equation}\mathcal{L}{_g}
X:=\mbox{Map}_{\mathbb{Z}}(\mathbb{R},
X).\label{chongL}\end{equation}

In each $\mathcal{L}{_g}
X$, the group $\mathbb{Z}$ acts on $\mathbb{R}$ by
group multiplication and the generator $1$ in $\mathbb{Z}$ acts on $X$ as the element $g$ via the $G-$action.
The groupoid $\mathcal{L}{_g}
X/\!\!/ L_gG$ is a full subgroupoid of $Loop_2(X/\!\!/G)$.

Now we consider the extended loop spaces  with richer morphism
spaces.
\begin{definition}[$Loop^{ext}_2(X/\!\!/G)$]\label{loopext3space}

Let $Loop^{ext}_2(X/\!\!/G)$ denote the groupoid with the same objects
as $Loop_2(X/\!\!/G)$. A morphism $$(\sigma, \gamma)\longrightarrow
(\sigma', \gamma')$$ consists of the pair $(\alpha, t)$ with
$\alpha:\mathbb{R}\longrightarrow G$ a continuous map and
$t\in\mathbb{R}$ a rotation on $S^1$ satisfying
$\gamma'(s)=\gamma(s-t)\alpha(s-t)$.

The groupoid $Loop_2(X/\!\!/G)$ is a subgroupoid of $Loop^{ext}_2(X/\!\!/G)$.

\end{definition}
\begin{lemma}The groupoid $Loop^{ext}_1(X/\!\!/G)$ is isomorphic to a full subgroupoid of $Loop^{ext}_2(X/\!\!/G)$.\end{lemma}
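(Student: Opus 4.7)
The plan is to exhibit a fully faithful functor $\Phi\colon Loop^{ext}_1(X/\!\!/G) \to Loop^{ext}_2(X/\!\!/G)$ whose image is the desired full subgroupoid; the isomorphism will then identify $Loop^{ext}_1(X/\!\!/G)$ with that image.

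First I would put each object of $Loop^{ext}_1(X/\!\!/G)$ into normal form. By Lemma \ref{ghostlem1}, any bibundle $(P \xrightarrow{\pi} S^1, f)$ is isomorphic to one with $P = P_\sigma$ for some $\sigma \in G$. A $G$-equivariant map $f_\gamma\colon P_\sigma \to X$ is then determined by $\gamma(s) := f_\gamma([s, e])$, and the relation $[s+1, e] = [s, \sigma]$ in $P_\sigma$ forces $\gamma(s+1) = \gamma(s) \cdot \sigma$. I set $\Phi(P_\sigma, f_\gamma) := (\sigma, \gamma)$, which is a legitimate object of $Loop^{ext}_2(X/\!\!/G)$.

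Next I would define $\Phi$ on morphisms. A morphism $(t, \alpha)$ from $(P_\sigma, f_\gamma)$ to $(P_{\sigma'}, f_{\gamma'})$ lifts, by $G$-equivariance, to a map of the form $[s, g] \mapsto [s+t, \beta(s) g]$ for some continuous $\beta\colon \mathbb{R} \to G$. Well-definedness on the quotient forces the cocycle $\sigma'\beta(s+1) = \beta(s)\sigma$, and the condition $f_{\gamma'} \circ \alpha = f_\gamma$ becomes $\gamma'(s+t) = \gamma(s)\beta(s)^{-1}$, i.e.\ $\gamma'(s) = \gamma(s-t)\beta(s-t)^{-1}$. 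Setting $\Phi(t, \alpha) := (\beta^{-1}, t)$, where $\beta^{-1}(s) := \beta(s)^{-1}$, produces a morphism in $Loop^{ext}_2(X/\!\!/G)$. Routine checks confirm that $\Phi$ sends identities to identities (in which case $\beta \equiv e$) and respects composition.

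To establish full faithfulness, I would invert this construction on morphisms. Given $(\alpha, t)\colon (\sigma, \gamma) \to (\sigma', \gamma')$ in $Loop^{ext}_2(X/\!\!/G)$ between objects in the image of $\Phi$, set $\beta(s) := \alpha(s)^{-1}$. I must verify the cocycle $\sigma'\beta(s+1) = \beta(s)\sigma$, equivalently $\sigma\alpha(s+1) = \alpha(s)\sigma'$; this is exactly the consistency identity noted in Definition \ref{loopspace3}, which follows from $\gamma'(s) = \gamma(s-t)\alpha(s-t)$ together with the twisted periodicities $\gamma(s+1) = \gamma(s)\sigma$ and $\gamma'(s+1) = \gamma'(s)\sigma'$. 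Then $[s, g] \mapsto [s+t, \beta(s)g]$ is a well-defined bundle map $P_\sigma \to P_{\sigma'}$ over rotation by $t$ and gives a preimage $(t, \beta)$ under $\Phi$; injectivity on Hom-sets is immediate from the reconstruction $\beta = \alpha^{-1}$. The main subtlety will be keeping consistent conventions for the sign of the rotation $t$ and the left versus right $G$-actions throughout, and verifying that the (implicit) cocycle condition on $\alpha$ in the definition of $Loop^{ext}_2(X/\!\!/G)$ is exactly what is needed to reconstruct the gauge datum $\beta$. Once the Hom-bijection is in place, the image of $\Phi$ is automatically a full subgroupoid of $Loop^{ext}_2(X/\!\!/G)$, and $\Phi$ provides the desired isomorphism onto it.
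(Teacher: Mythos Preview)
Your proposal is correct and follows essentially the same approach as the paper: define a functor $Loop^{ext}_1(X/\!\!/G)\to Loop^{ext}_2(X/\!\!/G)$ by reading off $(\sigma,\gamma)$ from the bundle data via $\gamma(s)=f([s,e])$, send a morphism to the pair $(\alpha,t)$ with $\alpha$ the inverse of the gauge datum, and check full faithfulness. Your write-up is in fact more careful than the paper's---you make explicit the normalization of bundles to the form $P_\sigma$ via Lemma~\ref{ghostlem1} and spell out the cocycle check and the reconstruction of the gauge map, whereas the paper leaves these implicit.
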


\begin{proof}
Define a functor  $$F: Loop^{ext}_1(X/\!\!/G)\longrightarrow
Loop^{ext}_2(X/\!\!/G)$$ by sending an object
$$\begin{CD} S^1 @<{\pi}<< P @>{f}>> X\end{CD}$$ to $(\sigma,
\gamma)$ with $\gamma(s):= f([s, e])$ and
$\sigma=\gamma(0)^{-1}\gamma(1)$ and sending a morphism
$$\xymatrix{S^1\ar[d]_{t} &P \ar[l]_{\pi}\ar[d]^F\ar[r]^{f} & X
\\ S^1 &P' \ar[l]^{\pi'}\ar[ru]_{f'} &}$$ to $(\alpha, t): (\sigma,
\gamma)\longrightarrow (\sigma', \gamma')$ with $\alpha(s):=F([s,
e])^{-1}.$

$F$ is a fully faithful functor but not essentially surjective.

\end{proof}

Therefore, $Loop^{ext}_2(X/\!\!/G)$ contains all the
information of $Loop^{ext}_1(X/\!\!/G)$. Next we show a
skeleton of the larger groupoid.


For each $g\in G$, $\mathcal{L}{_g} X/\!\!/
L_gG\rtimes\mathbb{T}$ is a full subgroupoid of $Loop^{ext}_2(X/\!\!/G)$ where $L_{g}G\rtimes\mathbb{T}$ acts on
$\mathcal{L}{_g} X$ by
\begin{equation}\delta \cdot(\gamma, t):= (s\mapsto
\delta(s+t)\cdot\gamma(s+t)), \mbox{  for any }(\gamma, t)\in
L^k_gG\rtimes\mathbb{T},\mbox { and    }\delta \in
\mathcal{L}{_g} X.\label{chongaction}\end{equation}
The action by $g$ on $\mathcal{L}{_g} X$ coincides with that
by $1\in\mathbb{R}$. So we have the isomorphism
\begin{equation}
L_gG\rtimes\mathbb{T}=L_g G\rtimes \mathbb{R}/\langle
(\overline{g}, -1)\rangle,\label{bengkui}
\end{equation} where $\overline{g}$ represents the constant loop
$\mathbb{T}\longrightarrow \{g\}\subseteq G$. 



We have already proved Proposition \ref{loop1equivske}.
\begin{proposition}
Let $G$ be a compact Lie group. The groupoid
$$\mathcal{L}(X/\!\!/G):=\coprod_{[g]}\mathcal{L}{_g}
X/\!\!/L_gG\rtimes\mathbb{T}$$ is a skeleton of
$Loop^{ext}_2(X/\!\!/G)$, where the coproduct goes over conjugacy
classes in $\pi_0G$.   \label{loop1equivske}
\end{proposition}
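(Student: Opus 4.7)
The plan is to exhibit the evident inclusion
$\iota\colon\mathcal{L}(X/\!\!/G)\hookrightarrow Loop^{ext}_2(X/\!\!/G)$
as an equivalence of groupoids by checking essential surjectivity, fullness, and faithfulness.  On objects, each summand $\mathcal{L}_g X/\!\!/(L_g G\rtimes\mathbb{T})$ is sent to the full subcategory on objects $(g,\gamma)$ of $Loop^{ext}_2(X/\!\!/G)$ with $\sigma=g$.  The automorphism groupoid of such an object sits naturally as $L_g G\rtimes\mathbb{R}$, and the identification (\ref{bengkui}) with $L_g G\rtimes\mathbb{T}$ lets the morphisms match on the nose, so once essential surjectivity and full faithfulness are verified, we are done.

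For essential surjectivity, start with an arbitrary object $(\sigma,\gamma)$ and let $[g]\in\pi_0 G$ be the chosen representative of the conjugacy class of $[\sigma]$.  Pick $h\in G$ with $[h^{-1}\sigma h]=[g]$; a direct calculation in the group $\pi_0 G$ yields $[\sigma^{-1}h g]=[\sigma]^{-1}[h][g]=[h]$, so $h$ and $\sigma^{-1}hg$ lie in the same component of $G$, which is path-connected by compactness.  Choose a path $p\colon[0,1]\to G$ from $h$ to $\sigma^{-1}h g$, and extend it to a continuous map $\alpha\colon\mathbb{R}\to G$ by imposing the recursion $\alpha(s+1)=\sigma^{-1}\alpha(s)g$ (this glues continuously at every integer because $p(1)=\sigma^{-1}p(0)g$).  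Set $\gamma'(s):=\gamma(s)\alpha(s)$; the identity $\gamma'(s+1)=\gamma(s)\sigma\cdot\sigma^{-1}\alpha(s)g=\gamma'(s)g$ makes $(g,\gamma')$ a legitimate object, and $(\alpha,0)$ is the required isomorphism $(\sigma,\gamma)\to(g,\gamma')$.

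For full faithfulness between image objects $(g,\gamma)$ and $(g',\gamma')$, the morphism equation $\gamma'(s)=\gamma(s-t)\alpha(s-t)$ together with the periodicity of $\gamma,\gamma'$ forces $\alpha(u+1)=g^{-1}\alpha(u)g'$.  Passing to $\pi_0 G$ gives $[\alpha(0)]^{-1}[g][\alpha(0)]=[g']$, so if $g,g'$ are distinct chosen representatives then no morphism exists, which handles the disjointness over conjugacy classes.  When $g=g'$, the condition is exactly $\alpha\in L_g G$, and the remaining freedom $t\in\mathbb{R}$ is collapsed to $\mathbb{T}$ by the relation $(\bar g,-1)\sim 1$ recorded in (\ref{bengkui}), with the induced action on $\gamma$ matching (\ref{chongaction}).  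The main obstacle is the essential surjectivity step: one has to produce the path $p$ and then verify that the recursive extension really yields a continuous $\alpha$ whose endpoints behave compatibly at every integer; once the $\pi_0 G$-level calculation $[\sigma^{-1}hg]=[h]$ is in hand this reduces to path-connectedness of components, which is where compactness of $G$ is used.
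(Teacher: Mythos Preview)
Your argument is correct and in fact supplies more detail than the paper does: the paper simply states ``We have already proved Proposition~\ref{loop1equivske}'' and relies on the preceding discussion (that each $\mathcal{L}_g X/\!\!/L_gG\rtimes\mathbb{T}$ is a full subgroupoid, together with the identification~(\ref{bengkui})) as constituting the proof, with essential surjectivity left implicit via Lemma~\ref{ghostlem1}. Your explicit path-extension argument for essential surjectivity is exactly what is needed to make that step precise, and your disjointness argument over conjugacy classes in $\pi_0G$ is the right one.

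Two small remarks. First, the path-connectedness of the identity component (and hence of every component) of $G$ follows from $G$ being a Lie group, since connected manifolds are path-connected; compactness plays no role there, so your phrase ``path-connected by compactness'' slightly misattributes the reason. Second, in matching the morphism sets you should note the sign convention: the morphism condition in Definition~\ref{loopext3space} reads $\gamma'(s)=\gamma(s-t)\alpha(s-t)$, while the action~(\ref{chongaction}) uses $s+t$; this is a harmless reparametrization $t\mapsto -t$, but it is worth saying so explicitly when you identify the hom-sets with $L_gG\rtimes\mathbb{T}$.
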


\begin{definition}[$Loop^{ext, tors}_2(X/\!\!/G)$]
Let $Loop^{ext, tors}_2(X/\!\!/G)$ denote the full subgroupoid of
$Loop^{ext}_2(X/\!\!/G)$ whose objects are the pairs $(\sigma,
\gamma)$ with $\sigma\in G^{tors}$ and $\gamma:
\mathbb{R}\longrightarrow X$ a continuous map such that
$\gamma(s+1)= \gamma(s)\cdot\sigma$, for any $s\in\mathbb{R}$.

\label{torsionlambda}
\end{definition}

The groupoid $Loop^{ext, tors}_2(X/\!\!/G)$ contains all the
information we want. But is it convenient enough to study? When
$G$ is not finite, The isotropy group $L_gG\rtimes\mathbb{T}$ of an object
in $\mathcal{L}{_g}X$ is an infinite dimensional topological
group. We need even smaller groups to define a good orbifold loop
space. Thus, we consider those elements $[\gamma, t]\in
\Lambda_{g}G$ with $\gamma$ a constant loop. They form a
subgroup of $\Lambda_{g}G$ which is the quotient group of
$C_G(g)\times \mathbb{R}/l\mathbb{Z}$ by the normal subgroup
generated by $(g, -1)$. We denote it by
$\Lambda_G(g)$.  
When $G$ is a compact Lie group, $\Lambda_G(g)$
is also a compact Lie group.

Therefore, instead of $Loop^{ext, tors}_2(X/\!\!/G)$, we
consider
a subgroupoid $L_{orb}(X/\!\!/G)$  of it in
Definition \ref{orbifoldloopspace}, which is closely related to Ganter's orbifold loop space in \cite{Gan07}, and afterwards
a full subgroupoid $\Lambda(X/\!\!/G)$ of $L_{orb}(X/\!\!/G)$, which we use to define quasi-elliptic cohomology in Section \ref{orbqec}.

We need Definition \ref{lambdak1} first.

\begin{definition}
Let $C_G(g, g')$ denote the set $$\{x\in G| gx=xg'\}.$$

Let $\Lambda_G(g, g')$ denote
the quotient of $C_G(g, g')\times \mathbb{R}/l\mathbb{Z}$ under
the equivalence $$(x, t)\sim (gx, t-1)=(xg',
t-1).$$\label{lambdak1}\end{definition}

\begin{definition} [$L_{orb} (X/\!\!/G)$ and $\Lambda(X/\!\!/G)$] \label{orbifoldloopspace}

Let $L_{orb}(X/\!\!/G)$  denote the groupoid with the same objects
as $Loop^{ext, tors}_2(X/\!\!/G)$, i.e. the space
$$\coprod\limits_{g\in G^{tors}}\mathcal{L}{_g} X,$$ and with
morphisms the space $\coprod\limits_{g, g'\in
G^{tors}}\Lambda_G(g,g')\times X^g$.

For $\delta\in \mathcal{L}{_g} X$, $[a, t]\in\Lambda_G(g,
g')$,
\begin{equation}\delta\cdot ([a, t], \delta) := (s\mapsto
\delta(s+t)\cdot a) \in \mathcal{L}{_{g'}}
X.\label{actlt}\end{equation} in the same way as
(\ref{chongaction}).

$L_{orb}(X/\!\!/G)$ has the same objects as the orbifold loop space in \cite{Gan07} and has more morphisms with the $\mathbb{T}-$action added.


The groupoid $\Lambda(X/\!\!/G)$ defined in Example \ref{torsionlambda}  is the
full subgroupoid of $L_{orb}(X/\!\!/G)$  with constant loops
as objects. In Section \ref{orbqec}, we have a thorough discussion of $\Lambda(X/\!\!/G)$.


\label{olpwelldefined}\end{definition}

Let $G^{tors}_{conj}$ denote a set of representatives of
$G-$conjugacy classes in $G^{tors}$.

\begin{lemma}The
groupoid $$\coprod_{g\in G^{tors}_{conj}}\mathcal{L}{_g}
X/\!\!/\Lambda_G(g),$$ is a skeleton of $L_{orb}
(X/\!\!/G)$. It does not depend
on the choice of representatives of the $G-$conjugacy classes. \label{orbskel}


\end{lemma}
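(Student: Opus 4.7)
The plan is to exhibit the evident inclusion functor
$$\iota:\coprod_{g\in G^{tors}_{conj}}\mathcal{L}_g X/\!\!/\Lambda_G(g)\longrightarrow L_{orb}(X/\!\!/G)$$
and show that it is an equivalence of groupoids, and then to deduce independence of the choice of $G^{tors}_{conj}$. The basic observation that makes $\iota$ well defined on morphisms is that, in Definition~\ref{lambdak1}, specializing $g=g'$ gives $C_G(g,g)=C_G(g)$ and hence $\Lambda_G(g,g)=\Lambda_G(g)$; so the isotropy of $(g,\delta)$ in $L_{orb}(X/\!\!/G)$ coincides with the isotropy of $\delta$ in $\mathcal{L}_g X/\!\!/\Lambda_G(g)$.

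For \emph{fully faithful}, the hom-set from $\delta\in\mathcal{L}_g X$ to $\delta'\in\mathcal{L}_{g'}X$ in $L_{orb}(X/\!\!/G)$ consists of those $[a,t]\in\Lambda_G(g,g')$ satisfying $\delta'=\delta\cdot[a,t]$ via formula~(\ref{actlt}), and this matches the hom-set on the left when $g=g'\in G^{tors}_{conj}$. If instead $g\ne g'$ are both in $G^{tors}_{conj}$, they represent distinct $G$-conjugacy classes, so $C_G(g,g')=\emptyset$ and both hom-sets are empty. For \emph{essential surjectivity}, pick any $(\sigma,\gamma)\in L_{orb}(X/\!\!/G)$, choose the representative $g\in G^{tors}_{conj}$ in the conjugacy class of $\sigma$ together with $h\in G$ satisfying $gh=h\sigma$, and set $\delta(s):=\gamma(s)\cdot h^{-1}$. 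A direct check gives $\delta(s+1)=\gamma(s)\sigma h^{-1}=\delta(s)\,g$, so $\delta\in\mathcal{L}_g X$, while $[h,0]\in\Lambda_G(g,\sigma)$ provides an explicit isomorphism $\iota(\delta)\to(\sigma,\gamma)$ via~(\ref{actlt}).

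Independence of the choice of representatives can be extracted abstractly from the uniqueness up to isomorphism of skeleta of a groupoid, but it is more satisfying to exhibit it directly: for a second representative $g'=h^{-1}gh$ one writes down the isomorphism $\mathcal{L}_g X/\!\!/\Lambda_G(g)\to\mathcal{L}_{g'}X/\!\!/\Lambda_G(g')$ given on objects by $\delta\mapsto\delta\cdot[h,0]$ and on isotropy by conjugation $[a,t]\mapsto[h^{-1}ah,t]$, and verifies functoriality. The main technical obstacle is bookkeeping the twisting conventions — the shift $s\mapsto s+t$ in~(\ref{actlt}), the sign in the equivalence $(x,t)\sim(gx,t-1)$ defining $\Lambda_G(g,g')$, and the left versus right action conventions on $\mathcal{L}_g X$ — but once these are pinned down every verification reduces to a one-line computation.
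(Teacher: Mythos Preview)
Your proposal is correct and follows exactly the approach the paper has in mind: the paper merely writes ``The proof is analogous to that of Lemma~\ref{loop1equivske}'' (whose own proof is absorbed into the discussion preceding it), and what you have written is precisely the unpacking of that analogy---exhibiting the evident inclusion as fully faithful and essentially surjective, then handling independence of representatives by the explicit conjugation isomorphism. Your careful tracking of the sign and shift conventions in $\Lambda_G(g,g')$ and formula~(\ref{actlt}) is appropriate, since that bookkeeping is the only content of the argument.
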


The proof is analogous to that of Lemma \ref{loop1equivske}.

\subsection{Ghost Loops} \label{ghost}

Let $G$ be a compact Lie group and $X$ a $G-$space. In this
section we introduce a subgroupoid $GhLoop(X/\!\!/G)$ of
$Loop^{ext}_1(X/\!\!/G)$, which can be computed locally.

\begin{definition}[Ghost Loops]The groupoid of ghost loops is defined to be the full subgroupoid
$GhLoop(X/\!\!/G)$ of $Loop^{ext}_1(X/\!\!/G)$ consisting of
objects $S^1\leftarrow
P\buildrel{\widetilde{\delta}}\over\rightarrow X$ such that
$\widetilde{\delta}(P)\subseteq X$ is contained in a single
$G-$orbit. \label{ghostloopdef} \end{definition}

For a given $\sigma\in G$, define the space
\begin{equation}
GhLoop_{\sigma}(X/\!\!/G):=\{\delta\in \mathcal{L}{_{\sigma}}
X |\delta(\mathbb{R})\subseteq G\delta(0) \}.
\end{equation}

We have a corollary of Proposition \ref{loop1equivske} below.
\begin{proposition}$GhLoop(X/\!\!/G)$ is equivalent to the
groupoid
$$\Lambda(X/\!\!/G):=\coprod_{[\sigma]}GhLoop_{\sigma}(X/\!\!/G)/\!\!/L^1_{\sigma}G\rtimes\mathbb{T}$$
where the coproduct goes over conjugacy classes in $\pi_0G$.
\label{skullofghost}
\end{proposition}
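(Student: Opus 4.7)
The plan is to derive this as a restriction of Proposition \ref{loop1equivske} along the fully faithful functor $F\colon Loop^{ext}_1(X/\!\!/G)\hookrightarrow Loop^{ext}_2(X/\!\!/G)$ supplied by the earlier lemma. The idea is that ghost loops cut out a subgroupoid on each side, these two subgroupoids correspond under $F$, and the already computed skeleton of $Loop^{ext}_2(X/\!\!/G)$ restricts to give the disjoint union claimed in the statement.

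First, I would identify the image of $GhLoop(X/\!\!/G)$ under $F$. Recall that $F$ sends $(P,\pi,f)$ to $(\sigma,\gamma)$ with $\gamma(s)=f([s,e])$ and $\sigma=\gamma(0)^{-1}\gamma(1)$. Since $f$ is $G$-equivariant and $P\cong P_\sigma$ is a single orbit of the $\mathbb{R}\times G$-action, the image $f(P)$ equals $G\cdot\gamma(0)$ together with all the $\gamma(s)$; thus $(P,\pi,f)$ is a ghost loop exactly when $\gamma(\mathbb{R})\subseteq G\cdot\gamma(0)$, i.e.\ $\gamma\in GhLoop_\sigma(X/\!\!/G)$. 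Second, I would check that on this subcategory $F$ is essentially surjective onto the corresponding full subgroupoid of $Loop^{ext}_2$: given any $(\sigma,\gamma)$ with $\gamma(\mathbb{R})\subseteq G\cdot\gamma(0)$, the bundle $P_\sigma=\mathbb{R}\times G/(s+1,h)\sim(s,\sigma h)$ from Lemma \ref{ghostlem1}, together with the equivariant map $[s,h]\mapsto\gamma(s)\cdot h$, is a preimage ghost loop (well-definedness uses $\gamma(s+1)=\gamma(s)\cdot\sigma$). Combined with the fully-faithfulness of $F$, this gives an equivalence of categories between $GhLoop(X/\!\!/G)$ and its ghost-loop image in $Loop^{ext}_2(X/\!\!/G)$.

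Third, I would transport the skeleton from Proposition \ref{loop1equivske}. The subspace $GhLoop_\sigma(X/\!\!/G)\subseteq\mathcal{L}_\sigma X$ is preserved by the $L_\sigma G\rtimes\mathbb{T}$-action in (\ref{chongaction}), since rotating the argument and multiplying pointwise by an element of $G$ both preserve the property of remaining inside a single $G$-orbit. Hence the full subgroupoid of the skeleton $\mathcal{L}(X/\!\!/G)$ on ghost loops is precisely $\coprod_{[\sigma]}GhLoop_\sigma(X/\!\!/G)/\!\!/L_\sigma G\rtimes\mathbb{T}$, indexed by $\pi_0 G$-conjugacy classes as in Lemma \ref{ghostlem1}. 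Being a skeleton of a groupoid equivalent to $GhLoop(X/\!\!/G)$, it is itself equivalent to $GhLoop(X/\!\!/G)$.

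The only genuinely nontrivial step is the essential surjectivity of $F$ when restricted to ghost loops: on the full category $F$ is only fully faithful, and one must use the specific equivariance $\gamma(s+1)=\gamma(s)\cdot\sigma$ of a ghost loop together with Lemma \ref{ghostlem1} to build the bundle-theoretic preimage. Everything else is routine bookkeeping on top of Proposition \ref{loop1equivske}; the notation $L^1_\sigma G$ in the statement should be read as the twisted loop group $L_\sigma G$ appearing in that skeleton.
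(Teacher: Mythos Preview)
Your proposal is correct and matches the paper exactly: the paper presents Proposition \ref{skullofghost} simply as a corollary of Proposition \ref{loop1equivske}, with no further proof, and your argument via $F$ is the natural way to unpack that corollary. One small correction to your self-assessment: the equivariance $\gamma(s+1)=\gamma(s)\cdot\sigma$ that you describe as ``specific to ghost loops'' is in fact the defining property of \emph{every} object $(\sigma,\gamma)$ of $Loop_2^{ext}(X/\!\!/G)$, so your preimage construction $[s,h]\mapsto\gamma(s)\cdot h$ works for arbitrary objects and the ghost-loop hypothesis plays no special role in the essential-surjectivity step; whatever obstructs essential surjectivity of $F$ on the full category is left implicit both by you and by the paper.
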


\begin{example}
If $G$ is a finite group, it has the discrete topology. In this
case, $LG$ consists of constant loops and, thus,  is isomorphic to
$ G$. The space of objects of $GhLoop(X/\!\!/G)$ can be identified
with $X$. For $\sigma\in G$ and any integer $k$, $L_{\sigma} G$
can be identified with $C_G(\sigma)$;
$L_{\sigma}G\rtimes\mathbb{T}\cong C_G(\sigma)\times
\mathbb{R}/\langle (\sigma, -1)\rangle$; and
$GhLoop_{\sigma}(X/\!\!/G)$ can be identified with $X^{\sigma}$.

\label{finiteghost}
\end{example}

Unlike true loops, ghost loops have the property that they can be
computed locally, as shown in  the lemma below. The proof is left
to the readers.

\begin{proposition}If $X=U\cup V$ where $U$ and $V$ are $G-$invariant open subsets,
then $GhLoop(X/\!\!/G)$ is isomorphic to the fibred product of
groupoids $$GhLoop(U/\!\!/G)\cup_{GhLoop((U\cap
V)/\!\!/G)} GhLoop(V/\!\!/G).$$ \label{ghostmv} 
\end{proposition}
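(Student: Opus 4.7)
My approach is to construct the isomorphism directly from the universal property of the pushout, using the crucial fact that $G$-orbits in $X$ are indecomposable with respect to the decomposition $X = U \cup V$.

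The first step is the orbit observation: because $U$ and $V$ are $G$-invariant open subsets whose union is $X$, any single $G$-orbit $O \subseteq X$ is contained either in $U$ or in $V$ (and possibly in both, in which case $O \subseteq U \cap V$). Indeed, if $x \in O \cap U$, then $G \cdot x = O \subseteq U$ since $U$ is $G$-invariant. Thus for any ghost loop $S^1 \xleftarrow{\pi} P \xrightarrow{\tilde\delta} X$, the hypothesis that $\tilde\delta(P)$ lies in a single orbit forces $\tilde\delta(P) \subseteq U$ or $\tilde\delta(P) \subseteq V$.

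The second step is to assemble the obvious inclusion functors. The open inclusions $U \hookrightarrow X$, $V \hookrightarrow X$, $U \cap V \hookrightarrow U$, $U \cap V \hookrightarrow V$ are all $G$-equivariant, so they induce functors $i_U, i_V, j_U, j_V$ on the associated ghost loop groupoids, and the square $i_U \circ j_U = i_V \circ j_V$ (as each side equals the inclusion $GhLoop((U \cap V)/\!\!/G) \to GhLoop(X/\!\!/G)$) commutes strictly. The universal property of the pushout then produces a comparison functor
$$\Phi: GhLoop(U/\!\!/G) \cup_{GhLoop((U\cap V)/\!\!/G)} GhLoop(V/\!\!/G) \longrightarrow GhLoop(X/\!\!/G).$$

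It remains to check $\Phi$ is an isomorphism of groupoids. Essential surjectivity (in fact surjectivity on objects) follows directly from the orbit observation above: every object of $GhLoop(X/\!\!/G)$ is literally in the image of $i_U$ or $i_V$. For fullness and faithfulness, consider a morphism $(t,\alpha): (P,\tilde\delta) \to (P', \tilde\delta')$ in $GhLoop(X/\!\!/G)$; since $\tilde\delta = \tilde\delta' \circ \alpha$, the image orbits of $P$ and $P'$ coincide, and that common orbit lies in $U$ or in $V$, so both objects and the morphism itself live entirely in the corresponding subgroupoid $GhLoop(U/\!\!/G)$ or $GhLoop(V/\!\!/G)$. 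If the common orbit lies in $U \cap V$, the morphism comes from $GhLoop((U \cap V)/\!\!/G)$, which is precisely what the pushout relation identifies. Conversely, the pushout identifies no extra morphisms beyond this, so $\Phi$ is bijective on Hom-sets. The main subtlety — and the only real content of the argument — is the $G$-invariance step that keeps individual orbits from being split between $U \setminus V$ and $V \setminus U$; without it, a ghost loop's image could straddle both pieces and no such Mayer--Vietoris decomposition would hold (this is exactly why the analogous statement fails for ordinary loops).
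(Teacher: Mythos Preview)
The paper does not actually give a proof of this proposition; it states immediately after the proposition that ``the proof is left to the readers.'' Your argument is correct and is precisely the one the authors presumably have in mind: the essential content is the observation that $G$-invariance of $U$ and $V$ forces every $G$-orbit to lie entirely in $U$ or entirely in $V$, which is exactly what makes ghost loops (as opposed to arbitrary loops) decompose along such a cover. One minor comment: you might make explicit that the inclusions $j_U$, $j_V$ are full and faithful embeddings of groupoids (since the morphism data $(t,\alpha)$ does not depend on the ambient target), so that the pushout of groupoids here really is the naive ``union along the overlap'' and no free compositions are introduced; this is implicit in your faithfulness check but worth stating, since pushouts of groupoids are in general more complicated.
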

Thus, the ghost loop construction satisfies Mayer-Vietoris
property. Moreover, it has the change-of-group property.

\begin{proposition}Let $H$ be a closed subgroup of $G$. It acts on the
space of left cosets $G/H$ by left multiplication. Let $\mbox{pt}$
denote the single point space with the trivial $H-$action. Then we
have the equivalence  of topological groupoids between
$Loop^{ext}_1((G/H)/\!\!/G)$ and $Loop^{ext}_1(\mbox{pt}/\!\!/H)$.
Especially, there is an equivalence between the groupoids
$GhLoop((G/H)/\!\!/G)$ and $GhLoop(\mbox{pt}/\!\!/ H)$.
\label{globalghost}\end{proposition}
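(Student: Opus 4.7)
The plan is to construct the equivalence via the classical correspondence between $G$-equivariant maps $P\to G/H$ and reductions of structure group from $G$ to $H$. Given an object $(S^1\leftarrow P\to G/H)$ of $Loop^{ext}_1((G/H)/\!\!/G)$, the preimage $Q:=f^{-1}(eH)$ is a principal $H$-subbundle of $P$ over $S^1$, hence an object of $Loop^{ext}_1(\mbox{pt}/\!\!/H)$. Conversely, given a principal $H$-bundle $Q\to S^1$, one builds the associated principal $G$-bundle $P:=Q\times_H G$ together with the natural $G$-equivariant map $[q,g]\mapsto g^{-1}H$. The inclusion $q\mapsto[q,e]$ identifies $Q$ with the fiber over $eH$ in $Q\times_H G$, and dually $f^{-1}(eH)\times_H G\cong P$ canonically, so the two constructions are mutually inverse up to canonical isomorphism.

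Next I would extend the assignments to morphisms. A morphism $(t,\alpha):(P,f)\to(P',f')$ in $Loop^{ext}_1((G/H)/\!\!/G)$ consists of a rotation $t\in\mathbb{T}$ together with a $G$-bundle map $\alpha$ over $t$ satisfying $f'\circ\alpha=f$; this forces $\alpha(Q)\subseteq Q'$, so $(t,\alpha|_Q)$ is the required morphism in $Loop^{ext}_1(\mbox{pt}/\!\!/H)$. In the reverse direction, a morphism $(t,\beta):Q\to Q'$ extends canonically to $(t,\beta\times_H\mathrm{id}_G)$. It is then routine to check that both functors respect identities and composition, and that the canonical isomorphisms built in the previous paragraph are natural in morphisms, yielding the asserted equivalence of topological groupoids.

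Finally, to deduce the ghost-loop statement, observe that $G/H$ is a single $G$-orbit, so the defining condition of Definition \ref{ghostloopdef} is automatic and $GhLoop((G/H)/\!\!/G)=Loop^{ext}_1((G/H)/\!\!/G)$; symmetrically $\mbox{pt}$ is a single $H$-orbit so $GhLoop(\mbox{pt}/\!\!/H)=Loop^{ext}_1(\mbox{pt}/\!\!/H)$, and the equivalence restricts trivially. The main subtlety I expect to face is pinning down the left/right action conventions in Definition \ref{loopspacemorphism} so that $Q=f^{-1}(eH)$ and the inverse $Q\times_H G$ carry compatible $H$- and $G$-actions; once the conventions are fixed the verifications are formal, but keeping them consistent throughout the associated-bundle construction is where most of the bookkeeping lives.
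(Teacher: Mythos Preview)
Your proposal is correct and follows essentially the same approach as the paper: the paper also defines $Q$ as the pullback of $P$ along $\{eH\}\hookrightarrow G/H$ (which is exactly your $f^{-1}(eH)$) and takes $G\times_H Q\to G\times_H\mbox{pt}=G/H$ as the inverse construction, then checks that the two functors are mutually inverse. Your treatment is arguably a bit more careful in saying ``up to canonical isomorphism'' rather than calling the composites literal identities, and your final paragraph---observing that $G/H$ is a single $G$-orbit so that $GhLoop((G/H)/\!\!/G)=Loop^{ext}_1((G/H)/\!\!/G)$---is a cleaner way to deduce the ghost-loop statement than the paper's ``in the same way'' remark.
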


\begin{proof}

First we define a functor $F:
Loop^{ext}_1((G/H)/\!\!/G)\longrightarrow
Loop^{ext}_1(\mbox{pt}/\!\!/ H)$ sending an object $S^1\leftarrow
P\buildrel{\widetilde{\delta}}\over\rightarrow G/H$  to
$S^1\leftarrow Q\rightarrow \{eH\}=\mbox{pt}$ where
$Q\longrightarrow eH$ is the constant map, and $Q\longrightarrow
S^1$ is the pull back bundle $$\xymatrix{Q \ar[r] \ar[d] &\{eH\}
\ar@{^{(}->}[d]
\\ P\ar[r] &G/H.}$$

It sends a morphism $$\xymatrix{P'\ar[r] \ar[d]&P \ar[r] \ar[d]
&G/H\\ S^1\ar[r] &S^1&}$$ to the morphism
$$\xymatrix{Q' \ar[r] \ar[d] &Q\ar[r] \ar[d] &\{eH\} \ar[d] \\ P'\ar[r]\ar[d] &P\ar[r]\ar[d] &G/H\\
S^1 \ar[r] &S^1 & } $$ where all the squares are pull-back.

In addition, we can define a functor $F': Loop^{ext}_1
(\mbox{pt}/\!\!/ H) \longrightarrow Loop^{ext}_1((G/H)/\!\!/G)$
sending an object $S^1\leftarrow
Q\rightarrow \mbox{pt}$ to $S^1\leftarrow G\times_HQ\rightarrow
G\times_H\mbox{pt}=G/H$ and sending a morphism $$\xymatrix{Q'\ar[r] \ar[d] & Q \ar[d] \\ S^1\ar[r]
&S^1}$$ to $$\xymatrix{G\times_HQ'\ar[r] \ar[d] & G\times_HQ \ar[d] \ar[r] &G\times_H\mbox{pt}=G/H\\ S^1\ar[r]
&S^1 &}$$

$F\circ F'$ and $F'\circ F$ are both identity maps. So the
topological groupoids $Loop^{ext}_1((G/H)/\!\!/G)$ and
$Loop^{ext}_1(\mbox{pt}/\!\!/ H)$ are equivalent.

\bigskip

We can prove the equivalence between $GhLoop((G/H)/\!\!/G)$ and
$GhLoop(\mbox{pt}/\!\!/ H)$ in the same way.
\end{proof}


\begin{remark}In general, if $H^*$ is an equivariant cohomology theory,
Proposition \ref{globalghost} implies the functor $$X/\!\!/G
\mapsto H^*(GhLoop(X/\!\!/G))$$ gives a new equivariant cohomology
theory. When $H^*$ has the change of group isomorphism, so does
$H^*(GhLoop(-))$.
\end{remark}

\section{Quasi-elliptic cohomology $QEll^*_G(-)$}

\label{conqec}
In Section \ref{orbqec} we introduce the
construction of quasi-elliptic cohomology first in terms of
orbifold K-theory and then equivariant K-theory. We show the
properties of the theory in Section \ref{propertiesqec}.
The main
references for Section \ref{conqec} are Rezk's unpublished work \cite{Rez11} and the author's PhD thesis \cite{Huanthesis}.

Before that in
Section \ref{lambdarepresentationlemma} we show the representation
ring of \begin{equation}\Lambda_G(g):= C_G(g)\times
\mathbb{R}/\langle (g, -1)\rangle,\label{lambdadef}\end{equation}which is a factor of $QEll^*_G(\mbox{pt})$.

Moreover, in Section \ref{orbqec} we discuss the $\Lambda-$ring structure of $QEll^*_G(X)$.
In Section \ref{lambdarepresentationlemma} we introduce two groups $\Lambda_G^k(g)$ and $\Lambda_n(g)$ related closely to $\Lambda_G(g)$. We discuss some $\Lambda-$ring homomorphisms
between the representation rings of them, which are essential in the construction of $\Lambda-$ring homomorphisms on quasi-elliptic cohomology in Section \ref{orbqec}.
A good reference for $\Lambda-$rings is the book \cite{Yau10}.

\subsection{Preliminary: representation ring of
$\Lambda_G(g)$}\label{lambdarepresentationlemma}

Let $q: \mathbb{T}\longrightarrow U(1)$
be the isomorphism $t\mapsto e^{2\pi it}$. The representation ring
$R\mathbb{T}$ of the circle group is $\mathbb{Z}[q^{\pm}]$.

For any compact Lie group $G$ and a torsion element $g\in G$, we have an exact sequence
$$1\longrightarrow C_G(g)\longrightarrow
\Lambda_G(g)\buildrel{\pi}\over\longrightarrow\mathbb{T}\longrightarrow
0$$ where the first map is $g\mapsto [g, 0]$ and the second map is
\begin{equation}\pi([g, t])= e^{2\pi it}.\label{pizq}\end{equation}
The map $\pi^*: R\mathbb{T}\longrightarrow R\Lambda_G(g)$ equips
the representation ring $R\Lambda_G(g)$ the structure as an $R\mathbb{T}-$module.

There is a relation between the representation ring of $C_G(g)$
and that of $\Lambda_G(g)$, which is shown as Lemma 1.2
in \cite{Rez11} and Lemma 2.4.1 in \cite{Huanthesis}.
\begin{lemma}

$\pi^*: R\mathbb{T}\longrightarrow R\Lambda_G(g)$ exhibits
$R\Lambda_G(g)$ as a free $R\mathbb{T}-$module.

In particular, there is an $R\mathbb{T}-$basis of $R\Lambda_G(g)$
given by irreducible representations $\{V_{\lambda}\}$, such that
restriction $V_{\lambda}\mapsto V_{\lambda}|_{C_G(g)}$ to $C_G(g)$
defines a bijection between $\{V_{\lambda}\}$ and the set
$\{\lambda\}$ of irreducible representations of
$C_G(g)$.\label{cl}
\end{lemma}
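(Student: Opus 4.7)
The plan is to exploit the structure of $\Lambda_G(g)$ coming from a central circle subgroup. Let $n$ denote the finite order of the torsion element $g$, and set $\mathbb{T}' := \{[e,t] : t \in \mathbb{R}\} \subseteq \Lambda_G(g)$. The chain of equivalences $(e,n) \sim (g,n-1) \sim \cdots \sim (g^n,0) = (e,0)$ shows $\mathbb{T}' \cong \mathbb{R}/n\mathbb{Z}$, and since $[e,t][y,s] = [y,s+t] = [y,s][e,t]$, the subgroup $\mathbb{T}'$ is central in $\Lambda_G(g)$. Moreover $C_G(g) \cap \mathbb{T}' = \langle g \rangle$ (because $[e,k] = [g^{-k},0]$), and $\Lambda_G(g) = C_G(g) \cdot \mathbb{T}'$.

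First I would classify the irreducibles of $\Lambda_G(g)$. If $V$ is irreducible, then by Schur's lemma the central $\mathbb{T}'$ acts on $V$ through some character $[e,t] \mapsto e^{2\pi i k t/n}$ for an integer $k$. For any irreducible $C_G(g)$-subrepresentation $W \subseteq V$, the scalar action of $\mathbb{T}'$ preserves $W$, so $W$ is $\Lambda_G(g)$-stable and hence $W = V$. Therefore $\lambda := V|_{C_G(g)}$ is irreducible, and consistency on the intersection $\langle g\rangle$ forces $\lambda(g) = e^{2\pi i k/n} \cdot \mathrm{id}$, so $k$ is determined modulo $n$ by $\lambda$. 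Conversely, for any irreducible $\lambda$ of $C_G(g)$ with $\lambda(g) = e^{2\pi i m/n}$ and any integer $k \equiv m \pmod{n}$, the formula $\widetilde{\lambda}_k([x,t]) := \lambda(x)\, e^{2\pi i k t/n}$ is well-defined on equivalence classes and yields an irreducible extension of $\lambda$.

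The final step is to read off the $R\mathbb{T}$-module structure. The class $\pi^* q$ is the character $[x,t] \mapsto e^{2\pi i t}$, and tensoring by it sends $\widetilde{\lambda}_k$ to $\widetilde{\lambda}_{k+n}$. Choosing for each $\lambda \in \mathrm{Irr}(C_G(g))$ the canonical lift $V_\lambda := \widetilde{\lambda}_{m(\lambda)}$ with $m(\lambda) \in \{0, \ldots, n-1\}$, every irreducible of $\Lambda_G(g)$ is uniquely of the form $(\pi^* q)^j \cdot V_\lambda$ for some $j \in \mathbb{Z}$ and some $\lambda$. This realizes $R\Lambda_G(g)$ as a free $R\mathbb{T}$-module with basis $\{V_\lambda\}$ and gives the claimed bijection $V_\lambda \mapsto V_\lambda|_{C_G(g)} = \lambda$. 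The main subtle point I expect is the centrality-plus-Schur argument forcing $V|_{C_G(g)}$ to remain irreducible; once that is in hand, the remainder is bookkeeping with the congruence $k \equiv m(\lambda) \pmod{n}$.
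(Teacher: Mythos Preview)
Your proof is correct and follows essentially the same approach as the paper: both arguments classify the irreducibles of $\Lambda_G(g)$ as an irreducible $\lambda$ of $C_G(g)$ tensored with a compatible one-dimensional character of the circle direction, then read off the free $R\mathbb{T}$-module structure from the observation that tensoring by $\pi^*q$ shifts the character index by $n$. The only difference is packaging---the paper pulls back to the product $C_G(g)\times\mathbb{R}$ and invokes the tensor-product classification of irreducibles of a product, whereas you work intrinsically with the central circle $\mathbb{T}'$ and Schur's lemma; note the harmless sign slip $[e,k]=[g^{k},0]$ rather than $[g^{-k},0]$.
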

\begin{proof}
Note that $\Lambda_G(g)$ is isomorphic to
$$C_G(g)\times\mathbb{R}/\langle(g, -1)\rangle.$$ Thus, it is the quotient of the product of two compact Lie groups.

Let $\lambda: C_G(g)\longrightarrow GL(n, \mathbb{C})$ be an
$n-$dimensional $C_G(g)-$representation with representation space
$V$ and $\eta: \mathbb{R}\longrightarrow GL(n, \mathbb{C})$ be a
representation of $\mathbb{R}$ such that $\lambda(g)$ acts on $V$
via scalar multiplication by $\eta(1)$. Define
\begin{equation}\lambda\odot_{\mathbb{C}} \eta([h, t]):
=\lambda(h)\eta(t). \label{lambdaeq}\end{equation} It's straightforward to verify
$\lambda\odot_{\mathbb{C}} \eta$ is a $n-$dimensional
$\Lambda_G(g)-$representation with representation space $V$.

Any irreducible $n-$dimensional representation of the quotient
group $\Lambda_G(g)=C_G(g)\times\mathbb{R}/\langle(g, -1)\rangle$
is an irreducible $n-$dimensional representation of the product
$C_G(g)\times\mathbb{R}/\langle(g, -1)\rangle$. And any finite
dimensional irreducible representation of the product of two
compact Lie groups is the tensor product of an irreducible
representation of each factor. So any irreducible representation
of the quotient group $\Lambda_G(g)$ is the tensor product of an
irreducible representation $\lambda$ of $C_G(g)$ with
representation space $V$ and an irreducible representation $\eta$
of $\mathbb{R}$. Any irreducible complex representation $\eta$ of
$\mathbb{R}$ is one dimensional. So the representation space of
$\lambda\otimes \eta$ is still $V$. Let $l$ be the order of $g$.
$\eta(1)^l=I$. We need $\eta(1)=\lambda (g)$. So
$\eta(1)=e^{\frac{2\pi ik}{l}}$ for some $k\in \mathbb{Z}$. So
$$\eta(t)= e^{\frac{2\pi i(k+lm)t}{l}}.$$ Any $m\in\mathbb{Z}$
gives a choice of  $\eta$ in this case. And $\eta$ is a
representation of $\mathbb{R}/l\mathbb{Z}\cong \mathbb{T}$.

Therefore, we have a bijective correspondence between

(1) isomorphism classes of irreducible
$\Lambda_G(g)-$representation $\rho$, and

(2) isomorphism classes of pairs $(\lambda, \eta)$ where $\lambda$
is an irreducible $C_G(g)-$representation and
$\eta:\mathbb{R}\longrightarrow \mathbb{C}^*$ is a character such
that $\lambda(g)=\eta(1)I$. $\lambda=\rho|_{C_G(g)}$.
\end{proof}

\begin{remark} We can make a canonical choice of $\mathbb{Z}[q^{\pm}]$-basis
for $R\Lambda_{G}(g)$. For each irreducible $G$-representation
$\rho: G\longrightarrow Aut(G)$, write $\rho(\sigma)=e^{2\pi
ic}id$ for $c\in[0,1)$, and set $\chi_{\rho}(t)=e^{2\pi ict}$.
Then the pair $(\rho, \chi_{\rho})$ corresponds to a unique
irreducible $\Lambda_{G}(g)$-representation.
\label{lambdabasis}\end{remark}

\begin{example}[$G=\mathbb{Z}/N\mathbb{Z}$]
Let $G=\mathbb{Z}/N\mathbb{Z}$ for $N\geq 1$, and let $\sigma\in
G$. Given an integer $k\in\mathbb{Z}$ which projects to
$\sigma\in\mathbb{Z}/N\mathbb{Z}$, let $x_k$ denote the
representation of $\Lambda_G(\sigma)$ defined by
\begin{equation}\begin{CD}\Lambda_{G}(\sigma)=(\mathbb{Z}\times\mathbb{R})/(\mathbb{Z}(N,0)+\mathbb{Z}(k,1))
@>{[a,t]\mapsto[(kt-a)/N]}>> \mathbb{R}/\mathbb{Z}=\mathbb{T}
@>{q}>> U(1).\end{CD}\label{xk}\end{equation} $R\Lambda_G(\sigma)$
is isomorphic to the ring $\mathbb{Z}[q^{\pm}, x_k]/(x^N_k-q^k)$.
\bigskip

For any finite abelian group
$G=\mathbb{Z}/N_1\mathbb{Z}\times\mathbb{Z}/N_2\mathbb{Z}\times\cdots\times\mathbb{Z}/N_m\mathbb{Z}$,
let $\sigma=(k_1, k_2, \cdots k_n)\in G.$ We have
$$\Lambda_G(\sigma)\cong\Lambda_{\mathbb{Z}/N_1\mathbb{Z}}(k_1)\times_{\mathbb{T}}\cdots\times_{\mathbb{T}}\Lambda_{\mathbb{Z}/N_m\mathbb{Z}}(k_m).$$ Then
\begin{align*}R\Lambda_G(\sigma)&\cong
R\Lambda_{\mathbb{Z}/N_1\mathbb{Z}}(k_1)\otimes_{\mathbb{Z}[q^{\pm}]}\cdots\otimes_{\mathbb{Z}[q^{\pm}]}R\Lambda_{\mathbb{Z}/N_m\mathbb{Z}}(k_m)\\
&\cong\mathbb{Z}[q^{\pm}, x_{k_1}, x_{k_2},\cdots
x_{k_m}]/(x^{N_1}_{k_1}-q^{k_1},x^{N_2}_{k_2}-q^{k_2}, \cdots
x^{N_m}_{k_m}-q^{k_m})\end{align*} where all the $x_{k_j}$'s are
defined as $x_k$ in (\ref{xk}).\label{ppex}
\end{example}

\begin{example}[$G=\mathbb{T}$]
Let $G$ denote the circle group $\mathbb{T}=\mathbb{R}/\mathbb{Z}$. Let $\sigma\in G$ and $c\in\mathbb{R}$ which projects to $\sigma$.
Let $z_c$ denote the representation of $\Lambda_{\mathbb{T}}(\sigma)$ defined by \begin{equation}\begin{CD}\Lambda_{\mathbb{T}}(\sigma)=(\mathbb{R}\times\mathbb{R})/(\mathbb{Z}(1,0)+\mathbb{Z}(c,-1))
@>{[x,t]\mapsto[x+ct]}>> \mathbb{R}/\mathbb{Z}=\mathbb{T}
@>{q}>> U(1).\end{CD}\label{xkt}\end{equation} Observe that $z_{c+1}=qz_c$.
$R\Lambda_{\mathbb{T}}(\sigma)$
is isomorphic to the ring $\mathbb{Z}[q^{\pm}, z_c^{\pm}]$.
\label{ppext}
\end{example}

\begin{example}[$G=\Sigma_3$]\label{replambdasymm3}
$G=\Sigma_3$ has three conjugacy classes represented by $1$,
$(12)$, $(123)$ respectively.
\bigskip

$\Lambda_{\Sigma_3}(1)=\Sigma_3\times\mathbb{T}$, thus,
$R\Lambda_{\Sigma_3}(1)=R\Sigma_3\otimes R\mathbb{T}=\mathbb{Z}[X,
Y]/(XY-Y, X^2-1, Y^2-X-Y-1)\otimes\mathbb{Z}[q^{\pm}]$ where $X$
is the sign representation on $\Sigma_3$ and $Y$ is the standard
representation.
\bigskip

$C_{\Sigma_3}((12))=\langle(12)\rangle=\Sigma_2,$ thus,
$\Lambda_{\Sigma_3}((12))\cong\Lambda_{\Sigma_2}((12)).$ So we
have $R\Lambda_{\Sigma_3}((12))\cong
R\Lambda_{\Sigma_2}((12))=\mathbb{Z}[q^{\pm},
x_1]/(x_1^2-q)\cong\mathbb{Z}[q^{\pm\frac{1}{2}}].$
\bigskip

$C_{\Sigma_3}(123)=\langle(123)\rangle=\mathbb{Z}/3\mathbb{Z},$
thus,
$\Lambda_{\Sigma_3}((123))\cong\Lambda_{\mathbb{Z}/3\mathbb{Z}}(1).$
So we have $R\Lambda_{\Sigma_3}((123))\cong\mathbb{Z}[q^{\pm},
x_1]/(x_1^3-q)\cong\mathbb{Z}[q^{\pm\frac{1}{3}}].$\end{example}

Moreover, we have the conclusion below about the relation between
the induced representations
$Ind|^{\Lambda_G(\sigma)}_{\Lambda_H(\sigma)}(-)$ and
$Ind|^{C_G(\sigma)}_{C_H(\sigma)}(-).$
\begin{lemma}
Let $H$ be a subgroup of $G$ and $\sigma$ an element of $H$. Let
$m$ denote $[C_G(\sigma):C_H(\sigma)]$. Let $V$ denote a
$\Lambda_H(\sigma)-$representation $\lambda\odot_{\mathbb{C}}\chi$
with $\lambda$ a $C_H(\sigma)-$representation, $\chi$ a
$\mathbb{R}-$representation and $\odot_{\mathbb{C}}$ defined in
(\ref{lambdaeq}).

(i)
\begin{equation} res^{\Lambda_G(\sigma)}_{\Lambda_H(\sigma)}(\lambda\odot_{\mathbb{C}}\eta)=(res^{C_G(\sigma)}_{C_H(\sigma)}\lambda)\odot_{\mathbb{C}}\eta.\end{equation}

(ii) The induced representation
$$Ind^{\Lambda_G(\sigma)}_{\Lambda_H(\sigma)}
(\lambda\odot_{\mathbb{C}}\chi)$$ is isomorphic to the
$\Lambda_G(\sigma)-$representation
$$(Ind^{C_G(\sigma)}_{C_H(\sigma)}\lambda)\odot_{\mathbb{C}}\chi.$$
Their underlying vector spaces are both $V^{\oplus m}$.

Thus, the computation of both
$Ind^{\Lambda_G(\sigma)}_{\Lambda_H(\sigma)}
(\lambda\odot_{\mathbb{C}}\chi)$ and
$res^{\Lambda_G(\sigma)}_{\Lambda_H(\sigma)}(\lambda\odot_{\mathbb{C}}\eta)$
can be reduced to the computation of representations of finite
groups. \label{induequ}
\end{lemma}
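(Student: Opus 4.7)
The plan is to reduce both statements to standard facts about finite-index subgroup inclusions, by exploiting that the inclusion $\Lambda_H(\sigma)\hookrightarrow \Lambda_G(\sigma)$ covers the identity on the quotient $\mathbb{T}$ from (\ref{pizq}). Part (i) is a definitional unwinding: an element $[h,t]\in\Lambda_H(\sigma)$ maps to $[h,t]\in\Lambda_G(\sigma)$ by the same formula, and the operator $(\lambda\odot_{\mathbb{C}}\eta)([h,t])=\lambda(h)\eta(t)$ only invokes $\lambda$ on $C_H(\sigma)$, so the restriction is exactly $(\lambda|_{C_H(\sigma)})\odot_{\mathbb{C}}\eta$.

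For part (ii), my first step is to verify $[\Lambda_G(\sigma):\Lambda_H(\sigma)]=m$. Using the presentation $\Lambda_G(\sigma)=(C_G(\sigma)\times\mathbb{R})/\langle(\sigma,-1)\rangle$, each right coset of $\Lambda_H(\sigma)$ in $\Lambda_G(\sigma)$ has a representative of the form $[g,0]$ (translate by $[e,-t]\in\Lambda_H(\sigma)$), and $[g,0]\in\Lambda_H(\sigma)$ iff $g\in C_H(\sigma)$: the relation $(\sigma,-1)$ pairs $(g,0)$ only with $(g\sigma^{n},-n)$, which lies in $C_H(\sigma)\times\mathbb{R}$ precisely when $g\in C_H(\sigma)$, using that $\sigma\in H$. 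Hence any choice of coset representatives $x_1,\ldots,x_m$ of $C_H(\sigma)\backslash C_G(\sigma)$ lifts to coset representatives $[x_1,0],\ldots,[x_m,0]$ of $\Lambda_H(\sigma)\backslash\Lambda_G(\sigma)$. The underlying vector space of the induced representation then decomposes as $\bigoplus_{i=1}^{m}[x_i,0]\otimes V\cong V^{\oplus m}$.

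The main calculation is that this decomposition has the claimed $\Lambda_G(\sigma)$-module structure. The crucial observation is that in $\Lambda_G(\sigma)$ every element $[e,t]$ commutes with every $[x_i,0]$, since their preimages $(e,t)$ and $(x_i,0)$ commute in the direct product $C_G(\sigma)\times\mathbb{R}$. Consequently
\[
[e,t]\cdot\bigl([x_i,0]\otimes v\bigr)=[x_i,0]\otimes\bigl((\lambda\odot_{\mathbb{C}}\chi)([e,t])v\bigr)=\chi(t)\bigl([x_i,0]\otimes v\bigr),
\]
so $[e,t]$ acts as the scalar $\chi(t)$ on the entire induced space. For $g\in C_G(\sigma)$, expanding $g\cdot x_i=x_{j(i)}h_i$ with $h_i\in C_H(\sigma)$ gives $[g,0]\cdot([x_i,0]\otimes v)=[x_{j(i)},0]\otimes \lambda(h_i)v$, which is exactly the classical formula for $\mathrm{Ind}^{C_G(\sigma)}_{C_H(\sigma)}\lambda$. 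Assembling the two compatible actions yields the identity $\mathrm{Ind}^{\Lambda_G(\sigma)}_{\Lambda_H(\sigma)}(\lambda\odot_{\mathbb{C}}\chi)=(\mathrm{Ind}^{C_G(\sigma)}_{C_H(\sigma)}\lambda)\odot_{\mathbb{C}}\chi$.

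I do not expect a serious obstacle beyond the bookkeeping with the quotient relation; the only subtle point is ensuring that the coset enumeration descends correctly through the quotient by $\langle(\sigma,-1)\rangle$, which works precisely because $\sigma\in C_H(\sigma)\subseteq C_G(\sigma)$ so the relation respects both quotients simultaneously.
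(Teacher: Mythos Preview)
Your argument is correct. The paper itself does not give a proof of this lemma: it simply states ``The proof is straightforward and left to the readers.'' Your write-up supplies exactly the kind of direct verification the paper has in mind---unwinding the definition for restriction, lifting coset representatives from $C_H(\sigma)\backslash C_G(\sigma)$ to $\Lambda_H(\sigma)\backslash\Lambda_G(\sigma)$, and checking the two actions separately on the induced module---so there is nothing to compare against beyond confirming that your approach is the intended one.
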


The proof is straightforward and left to the readers.

\bigskip

Let $k$ be any integer. We describe the relation between
\begin{equation}\Lambda^k_G(g):= C_G(\sigma)\times
\mathbb{R}/\langle (g, -k)\rangle \label{lambdakdef}\end{equation}
and $\Lambda_G(g)$, as well as the relation between their
representation rings.

There is an exact sequence
$$\begin{CD}1 @>>> C_G(g) @>{g\mapsto [g, 0]}>> \Lambda^k_G(g) @>{\pi_k}>>\mathbb{R}/k\mathbb{Z} @>>> 0\end{CD}$$
where the second map $\pi_k: \Lambda^k_G(g)\longrightarrow
\mathbb{R}/k\mathbb{Z}$ is $\pi_k([g, t])= e^{2\pi i t}$.

Let $q^{\frac{1}{k}}: \mathbb{R}/k\mathbb{Z}\longrightarrow U(1)$
denote the composition
$$\begin{CD}\mathbb{R}/k\mathbb{Z} @>{t\mapsto \frac{t}{k}}>> \mathbb{R}/\mathbb{Z} @>{q}>> U(1).\end{CD}$$
The representation ring $R(\mathbb{R}/k\mathbb{Z})$ of
$\mathbb{R}/k\mathbb{Z}$ is  $\mathbb{Z}[q^{\pm\frac{1}{k}}]$. And
there is a canonical isomorphism of $\Lambda-$rings
$$R\Lambda_G(g)\longrightarrow R\Lambda_G^k(g)$$ sending $q$ to
$q^{\frac{1}{k}}$.

Analogous to Lemma \ref{cl}, we have the conclusion about
$R\Lambda^k_G(g)$ below.
\begin{lemma}\label{lambdakrepresentation}
The map $\pi^*_k: R(\mathbb{R}/k\mathbb{Z})\longrightarrow
R\Lambda^k_G(g)$ exhibits it as a free
$\mathbb{Z}[q^{\pm\frac{1}{k}}]-$module. There is a
$\mathbb{Z}[q^{\pm\frac{1}{k}}]-$basis of $R\Lambda^k_G(g)$ given
by irreducible representations $\{\rho_k\}$ such that the
restrictions $\rho_k|_{C_G(g)}$ of them to $C_G(g)$ are precisely
the $\mathbb{Z}$-basis of $RC_G(g)$ given by irreducible
representations.

In other words, any irreducible $\Lambda^k_G(g)-$representation
has the form $\rho\odot_{\mathbb{C}} \chi$ where $\rho$ is an
irreducible representation of $C_G(g)$,
$\chi:\mathbb{R}/k\mathbb{Z}\longrightarrow GL(n, \mathbb{C})$
such that $\chi(k)=\rho(g)$, and
\begin{equation}\rho\odot_{\mathbb{C}} \chi([h, t]):
=\rho(h)\chi(t)\mbox{,     for any   }[h, t]\in
\Lambda^k_G(g).\end{equation}
\end{lemma}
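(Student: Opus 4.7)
The plan is to mirror the proof of Lemma \ref{cl} almost verbatim, replacing the role of $\mathbb{R}/\mathbb{Z}$ by $\mathbb{R}/k\mathbb{Z}$ throughout and the descent condition $\eta(1)=\lambda(g)$ by $\chi(k)=\rho(g)$. First I would note that $\Lambda^k_G(g)$ is the quotient of the product Lie group $C_G(g)\times\mathbb{R}$ by the discrete central subgroup generated by $(g,-k)$, and that the second projection gives the short exact sequence
$$1\to C_G(g)\to\Lambda^k_G(g)\xrightarrow{\pi_k}\mathbb{R}/k\mathbb{Z}\to 0,$$
making $\Lambda^k_G(g)$ a compact Lie group. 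Continuous finite-dimensional irreducible representations of $\Lambda^k_G(g)$ are in bijection with those continuous finite-dimensional irreducibles of $C_G(g)\times\mathbb{R}$ which are trivial on $(g,-k)$.

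Next I would invoke the standard fact that every finite-dimensional irreducible continuous representation of $C_G(g)\times\mathbb{R}$ which descends to the compact quotient factors as an external tensor product $\rho\boxtimes\chi$, where $\rho$ is an irreducible $C_G(g)$-representation on a space $V$ and $\chi:\mathbb{R}\to\mathbb{C}^{\times}$ is a continuous character factoring through $\mathbb{R}/k\mathbb{Z}$. The descent condition $\rho(g)\chi(-k)=\mathrm{id}_V$ becomes $\chi(k)=\rho(g)$; by Schur's lemma $\rho(g)$ is a scalar $e^{2\pi i c}\mathrm{id}_V$ with $c\in[0,1)$, so the admissible $\chi$ are precisely those of the form $\chi_n(t)=e^{2\pi i(c+n)t/k}$ for $n\in\mathbb{Z}$. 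Defining $\rho\odot_{\mathbb{C}}\chi_n$ as in \eqref{lambdaeq} classifies all irreducibles of $\Lambda^k_G(g)$.

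To extract the basis statement I would note that $R(\mathbb{R}/k\mathbb{Z})=\mathbb{Z}[q^{\pm 1/k}]$ with generator $q^{1/k}(t)=e^{2\pi i t/k}$, and that twisting by $q^{1/k}$ sends $\rho\odot_{\mathbb{C}}\chi_n$ to $\rho\odot_{\mathbb{C}}\chi_{n+1}$. Choosing for each irreducible $\rho$ of $C_G(g)$ the canonical representative with $n=0$ produces a family of irreducible $\Lambda^k_G(g)$-representations $\{\rho_k\}$ whose restrictions to $C_G(g)$ recover the irreducibles of $C_G(g)$; these form the desired $\mathbb{Z}[q^{\pm 1/k}]$-basis of $R\Lambda^k_G(g)$. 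Freeness follows because distinct $\rho$'s yield non-isomorphic $C_G(g)$-restrictions, so the $\mathbb{Z}[q^{\pm 1/k}]$-submodules they generate sit in direct sum.

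There is no genuine new obstacle here: the only point requiring care is ensuring that the characters $\chi$ relevant to the descent are characters of the compact quotient $\mathbb{R}/k\mathbb{Z}$ and not arbitrary continuous characters of $\mathbb{R}$. This is automatic once one insists the representation be continuous and descend, reducing the analysis to Peter--Weyl for the compact product $C_G(g)\times(\mathbb{R}/k\mathbb{Z})$, which is exactly the move carried out with $k=1$ in Lemma \ref{cl}.
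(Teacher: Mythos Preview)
Your proposal is correct and is exactly the argument the paper intends: the paper does not write out a separate proof for this lemma but simply states that it is analogous to Lemma~\ref{cl}, and your write-up carries out that analogy in the expected way, replacing the descent condition $\eta(1)=\lambda(g)$ by $\chi(k)=\rho(g)$ and $R\mathbb{T}=\mathbb{Z}[q^{\pm}]$ by $R(\mathbb{R}/k\mathbb{Z})=\mathbb{Z}[q^{\pm 1/k}]$.
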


$R\Lambda_G^k(g)$ is a $\mathbb{Z}[q^{\pm}]-$module via the
inclusion $\mathbb{Z}[q^{\pm}]\longrightarrow
\mathbb{Z}[q^{\pm\frac{1}{k}}]$.

\bigskip

There is a group isomorphism $\alpha_k:
\Lambda^k_G(g)\longrightarrow \Lambda_G(g)$ sending $[g, t]$ to
$[g, \frac{t}{k}]$.

Observe that there is a pullback square of groups
\begin{equation}\xymatrix{&\Lambda_G^k(g)\ar[r]^{\alpha_k}\ar[d]^{\pi_k}
&\Lambda_G(g)\ar[d]^{\pi}\\&\mathbb{R}/k\mathbb{Z}
\ar[r]^{t\mapsto
\frac{t}{k}}&\mathbb{R}/\mathbb{Z}}\label{alphaklambdagroup}\end{equation}

By Lemma \ref{lambdakrepresentation}, we can make a
$\mathbb{Z}[q^{\pm\frac{1}{k}}]$-basis
$\{\rho\odot_{\mathbb{C}}\chi_{\rho, k}\}$ for $R\Lambda^k_{G}(g)$
with each $\rho: G\longrightarrow Aut(G)$ an irreducible
$G$-representation and $\chi_{\rho, k}(t)= e^{2\pi i\frac{ct}{k}}$
with $c\in[0,1)$ such that $\rho(\sigma)=e^{2\pi ic}id$. This
collection $\{\rho\odot_{\mathbb{C}}\chi_{\rho, k}\}$ gives  a
$\mathbb{Z}[q^{\pm\frac{1}{k}}]-$basis of $R\Lambda^k_G(g)$.

So we have the commutative square of a pushout square in the
category of $\Lambda-$rings. \begin{equation}\xymatrix{&
R\Lambda^k_G(g)&R\Lambda_G(g)\ar[l]\\
&R(\mathbb{R}/k\mathbb{Z})\ar[u]&R\mathbb{T}\ar[u]\ar[l]}\label{alphaklambdaring}\end{equation}


Moreover we consider
\begin{equation}\Lambda_n(\sigma):=\Lambda_{C_G(\sigma)}(\sigma^n).  \label{lambdakgroup}\end{equation} It is a
subgroup of $\Lambda_{G}(\sigma^n)$. Let $\beta:
\Lambda_n(\sigma)\longrightarrow\Lambda_G(\sigma^n)$ denote the
inclusion. We can define
\begin{equation}\alpha:\Lambda_n(\sigma)\longrightarrow\Lambda_G(\sigma),
(g,t)\mapsto(g, nt).\label{alphalambdan}\end{equation}

We have the pullback square of groups

\begin{equation}
\begin{CD}
\Lambda_n(\sigma)@>\alpha >> \Lambda_G(\sigma)\\
@VVV  @VVV\\
\mathbb{T} @>e^{2\pi i t}\mapsto e^{2\pi int}>> \mathbb{T}\\
\end{CD}
\label{pb}\end{equation}

In addition,  $R\Lambda_n(\sigma)$ is the $\Lambda$-ring pushout
of $\mathbb{Z}[q^{\pm}]\buildrel {R\pi}\over\longrightarrow
R\Lambda_G(\sigma)$ along the inclusion
$\mathbb{Z}[q^{\pm}]\longrightarrow\mathbb{Z}[q^{\pm\frac{1}{n}}].$

\begin{equation}
\begin{CD}
R\Lambda_n(\sigma)@<\alpha^* << R\Lambda_G(\sigma)\\
@AAA  @AAA\\
\mathbb{T} @<R[n]<< \mathbb{T}\\
\end{CD}
\label{po}\end{equation}

In particular, there is a canonical isomorphism of $\Lambda$-rings
\begin{equation}R\Lambda_n(\sigma)\buildrel\sim\over\longrightarrow
R\Lambda_G(\sigma)[
q^{\pm\frac{1}{n}}].\label{lambdaisoalpha}\end{equation}

\bigskip

\subsection{Quasi-elliptic cohomology}\label{orbqec} In this
section we introduce the definition of quasi-elliptic cohomology
$QEll^*_G$ in terms of orbifold K-theory, and then express it via
equivariant K-theory. We assume familiarity with
\cite{SegalequiK}. The reader may read Chapter 3 in \cite{ALRuan}
and \cite{Moe02} for a reference of orbifold K-theory.

Quasi-elliptic cohomology is defined from the  full subgroupoid
$\Lambda(X/\!\!/G)$ of the orbifold loop space $L_{orb}(X/\!\!/G)$
consisting of constant loops. Before describing $\Lambda(X/\!\!/G)$ in detail, we
recall what Inertia groupoid is. A reference for that is
\cite{LU01}.


\begin{definition}Let $\mathbb{G}$ be a groupoid. The Inertia
groupoid $I(\mathbb{G})$ of $\mathbb{G}$ is defined as follows.

An object $a$ is an arrow in $\mathbb{G}$ such that its source and
target are equal. 
A morphism $v$ joining two objects $a$ and $b$ is an arrow $v$ in
$\mathbb{G}$ such that $$v\circ a=b\circ v.$$ In other words, $b$
is the conjugate of $a$ by $v$, $b=v\circ a\circ v^{-1}$.

The torsion Inertia groupoid  $I^{tors}(\mathbb{G})$ of
$\mathbb{G}$ is a full subgroupoid
of of $I(\mathbb{G})$ with only objects of finite order.  
\label{inertiagroupoid}
\end{definition}

Let $G$ be a compact Lie group and $X$ a $G-$space.
\begin{example}The
torsion inertia groupoid $I^{tors}(X/\!\!/G)$ of the translation
groupoid $X/\!\!/G$ is the groupoid with

\textbf{objects}: the space $\coprod\limits_{g\in G^{tors}}X^{g}$

\textbf{morphisms}: the space $\coprod\limits_{g, g'\in
G^{tors}}C_G(g,g')\times X^g$ where $C_G(g,g')=\{\sigma\in
G|g'\sigma=\sigma g\}\subseteq G.$

For $x\in X^g$ and $(\sigma, g)\in C_G(g,g')\times X^g$, $(\sigma,
g)(x)=\sigma x\in X^{g'}.$ \label{torsionquotient}
\end{example}


\begin{definition} The groupoid $\Lambda(X/\!\!/G)$ has the same objects as
$I^{tors}(X/\!\!/G)$ but richer morphisms
$$\coprod\limits_{g, g'\in G^{tors}}\Lambda_G(g, g')\times X^g$$
where $\Lambda_G(g, g')$ is defined in Definition \ref{lambdak1}. For an object $x\in X^g$ and a morphism
$([\sigma, t], g)\in \Lambda_G(g, g')\times X^g$, $([\sigma, t],
g)(x)=\sigma x\in X^{g'}.$ The composition of the morphisms is
defined by
\begin{equation}[\sigma_1, t_1][\sigma_2,
t_2]=[\sigma_1\sigma_2, t_1+t_2].\end{equation} We have a
homomorphism of orbifolds
$$\pi: \Lambda(X/\!\!/G)\longrightarrow\mathbb{T}$$ sending all the objects to the single object in $\mathbb{T}$, and
a morphism $([\sigma,t], g)$ to $e^{2\pi it}$ in $\mathbb{T}$.

\end{definition}

\begin{definition} The quasi-elliptic cohomology $QEll^*_G(X)$ is defined to be
$K^*_{orb}(\Lambda(X/\!\!/G))$. \end{definition}We can unravel the definition and
express it via equivariant K-theory.

Let $\sigma\in G^{tors}$. The fixed point
space $X^{\sigma}$ is a $C_G(\sigma)-$space. We can define a
$\Lambda_G(\sigma)-$action on $X^{\sigma}$ by
$$[g, t]\cdot x:=g\cdot x.$$
Then we have

\begin{proposition}
\begin{equation}QEll^*_G(X)=\prod_{g\in
G^{tors}_{conj}}K^*_{\Lambda_G(g)}(X^{g})=\bigg(\prod_{g\in
G^{tors}}K^*_{\Lambda_G(g)}(X^{g})\bigg)^G.\end{equation}
\end{proposition}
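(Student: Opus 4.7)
The plan is to unwind the definition $QEll^*_G(X) = K^*_{orb}(\Lambda(X/\!\!/G))$ by producing a convenient skeleton of $\Lambda(X/\!\!/G)$, then applying the standard identification of the orbifold K-theory of a translation groupoid with equivariant K-theory.

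First I would observe that the groupoid $\Lambda(X/\!\!/G)$ decomposes into connected components indexed by $G$-conjugacy classes in $G^{tors}$. Indeed, a morphism from $x \in X^g$ to $x' \in X^{g'}$ requires an element of $\Lambda_G(g,g')$, and $C_G(g,g') = \{x \in G \mid gx = xg'\}$ is nonempty exactly when $g$ and $g'$ are conjugate in $G$. For each representative $g \in G^{tors}_{conj}$, the full subgroupoid on objects lying in $X^g$ has morphism space $\Lambda_G(g,g) \times X^g = \Lambda_G(g) \times X^g$, with $[\sigma,t] \in \Lambda_G(g)$ acting on $x \in X^g$ by $\sigma \cdot x$; this is precisely the translation groupoid $X^g /\!\!/ \Lambda_G(g)$. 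A short argument, parallel to Lemma~\ref{orbskel}, shows that this full subgroupoid is equivalent to the entire component of $\Lambda(X/\!\!/G)$ sitting over $[g]$: given any $x \in X^{g_1}$ with $g_1 = h g h^{-1}$, conjugation by $h$ furnishes an isomorphism to an object of $X^g$, and such an isomorphism exists uniquely up to $\Lambda_G(g)$.

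Next I would invoke the fact that orbifold K-theory is invariant under equivalence of Lie groupoids, together with the identification $K^*_{orb}(Y/\!\!/H) = K^*_H(Y)$ for a translation groupoid of a compact Lie group $H$ acting on a space $Y$ (see the references to \cite{SegalequiK} and \cite{Moe02} cited just before the definition). Since orbifold K-theory sends a coproduct of groupoids to a product of rings, combining these with the skeletal description gives
\begin{equation*}
QEll^*_G(X) \;=\; K^*_{orb}\Bigl(\,\coprod_{g \in G^{tors}_{conj}} X^g /\!\!/ \Lambda_G(g)\Bigr) \;=\; \prod_{g \in G^{tors}_{conj}} K^*_{\Lambda_G(g)}(X^g).
\end{equation*}

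For the second equality, I would define a $G$-action on $\prod_{g \in G^{tors}} K^*_{\Lambda_G(g)}(X^g)$: an element $h \in G$ sends the factor indexed by $g$ to that indexed by $hgh^{-1}$ using the group isomorphism $\Lambda_G(g) \xrightarrow{\sim} \Lambda_G(hgh^{-1})$, $[\sigma,t] \mapsto [h\sigma h^{-1},t]$, together with the homeomorphism $X^g \xrightarrow{\sim} X^{hgh^{-1}}$, $x \mapsto hx$. A tuple is $G$-fixed precisely when, for each $h$ and each $g$, the component at $hgh^{-1}$ is determined from the component at $g$ by the above transport, and hence a $G$-invariant tuple is determined by its values on any set of conjugacy class representatives; this identifies the $G$-fixed subring with $\prod_{g \in G^{tors}_{conj}} K^*_{\Lambda_G(g)}(X^g)$. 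The main subtleties will be verifying the equivalence of categories step cleanly (especially choosing the isomorphisms in a way compatible with the $\mathbb{T}$-rotation component of $\Lambda_G(g,g')$) and checking that the transport maps used to define the $G$-action are ring isomorphisms well-defined independent of the choice of conjugator $h$ up to the isotropy built into $\Lambda_G(g)$.
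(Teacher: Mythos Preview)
Your proposal is correct and is exactly the unwinding the paper has in mind; the paper in fact gives no explicit proof of this proposition, treating it as an immediate consequence of the definition together with the skeletal description established in Lemma~\ref{orbskel} (restricted to constant loops). Your only addition is the explicit verification of the $G$-invariants description, which the paper also leaves to the reader; the point you flag about $h\in C_G(g)$ acting trivially on $K^*_{\Lambda_G(g)}(X^g)$ follows from the standard fact that inner automorphisms act trivially on equivariant $K$-theory.
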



Thus, for each $g\in\Lambda_G(g)$, we can define the projection
$$\pi_g: QEll^*_G(X)\longrightarrow K^*_{\Lambda_G(g)}(X^{g}).$$
For the single point space, we have
\begin{equation}QEll^0_G(\mbox{pt})\cong\prod_{g\in G^{tors}_{conj}} R\Lambda_G(g). \label{qecpt}\end{equation}
\begin{remark}
According to Theorem 8.7.5, \cite{KM85} there is  a
smooth one-dimensional commutative group scheme $T$ over
$\mathbb{Z}[q^{\pm}]$ such that we have the unique isomorphism of ind-group-schemes on
$\mathbb{Z}((q))$ $$T_{torsion}\otimes_{\mathbb{Z}[q^{\pm}]}
\mathbb{Z}((q))\buildrel\sim\over\longrightarrow Tate(q)_{tors}$$ where $Tate(q)_{tors}$ is the torsion part of the Tate curve and $T_{torsion}$
is the torsion part of $T$.

The group scheme $T$ is discussed in Section 8.7, \cite{KM85}. The $N-$torsion points
$T[N]$ of it is the disjoint union of $N$ schemes $T_0[N]$,
$\cdots$ $T_{N-1}[N]$, where
$$T_i[N]=\mbox{Spec}(\mathbb{Z}[q^{\pm}][x]/(x^N-q^i)).$$

By the computation in Example \ref{ppex},  we have $$QEll_{\mathbb{Z}/N\mathbb{Z}}(\mbox{pt})=\prod_{m=0}^{N-1}K_{\Lambda_{\mathbb{Z}/N\mathbb{Z}}(m)}(\mbox{pt})
=\prod_{m=0}^{N-1}R\Lambda_{\mathbb{Z}/N\mathbb{Z}}(m)=\prod_{m=0}^{N-1}\mathbb{Z}[q^{\pm}, x_m]/(x^N_m-q^m).$$

Thus, we have the relation \begin{equation} \mbox{Spec}(QEll_{\mathbb{Z}/N\mathbb{Z}}(\mbox{pt}))\cong T[N].\end{equation}

Analogously, by the computation in Example \ref{ppext}, we have \begin{equation} \mbox{Spec}(QEll_{\mathbb{T}}(\mbox{pt}))\cong T.\end{equation} \label{KMgroup}
\end{remark}

By computing the representation rings of $R\Lambda_G(g)$, we get
$QEll^*_G(-)$ for contractible spaces. Then, using Mayer-Vietoris
sequence, we can compute $QEll^*_G(-)$ for any $G$-CW complex by
patching the $G$-cells together.

We have the ring homomorphism
$$\mathbb{Z}[q^{\pm}]=K^0_{\mathbb{T}}(\mbox{pt})\buildrel{\pi^*}\over\longrightarrow K^0_{\Lambda_G(g)}(\mbox{pt})\longrightarrow
K^0_{\Lambda_G(g)}(X)$$ where $\pi: \Lambda_G(g)\longrightarrow
\mathbb{T}$ is the projection defined in (\ref{pizq}) and the
second is via the collapsing map $X\longrightarrow \mbox{pt}$. So
$QEll_G^*(X)$ is naturally a
$\mathbb{Z}[q^{\pm}]-$algebra. 


The $\Lambda$-ring structure on $QEll^*_G(X)$ is the direct
product of the exterior algebra on each equivariant $K$-group,
with componentwise multiplication. 

For each $QEll^*_G$ we can equip a special family of
$\Lambda-$ring homomorphisms
$$\mu^n: QEll^*_G(X)\cong\prod_{g\in
G^{tors}_{conj}}K^*_{\Lambda_G(g)}(X^{g})\longrightarrow\prod_{g\in
G^{tors}_{conj}}K^*_{\Lambda_G^n(g)}(X^{g})\cong
QEll^*_G(X)[q^{\pm\frac{1}{n}}]$$ defined by
\begin{equation}QEll^*_G(X)\longrightarrow
K^*_{\Lambda_G(g^n)}(X^{g^n})\buildrel\beta^*\over\longrightarrow
K^*_{\Lambda_G^n(g)}(X^{g^n})\longrightarrow
K^*_{\Lambda_G^n(g)}(X^{g}),\label{munconstr}\end{equation} where
the first map is projection, the second is the restriction via the
inclusion $\beta: \Lambda_G^n(g)\longrightarrow\Lambda_G(g^n)$, and
the third is restriction along $X^{g}\subseteq X^{g^n}.$

\bigskip

In addition, we can express the $\Lambda-$ring isomorphism
(\ref{lambdaisoalpha}) and this family of $\Lambda-$ring
homomorphisms $\{\mu^n\}_n$ in terms of orbifold K-theory, which
are fairly neat.

Let $\Lambda_n(g,g')$ denote the quotient of $C_G(g,
g')\times\mathbb{R}$ under the action of $\mathbb{Z}$ where the
action of the generator of $\mathbb{Z}$ is given by $(\sigma,
t)\mapsto (\sigma g^n, t-1)=(g^n\sigma', t-1).$ Then we can define
another groupoid $\Lambda_n(X/\!\!/G)$ with the same objects as
$\Lambda(X/\!\!/G)$ and morphisms $$\coprod\limits_{g, g'\in
G^{tors}}\Lambda_n(g,g')\times X^g$$ such that for each $x\in
X^g$, $([\sigma, t], g)(x)=\sigma x\in X^{g'}$. We can also define
$\pi: \Lambda_n(X/\!\!/G)\longrightarrow\mathbb{T}$ sending all the
objects to the single object in $\mathbb{T}$ and a morphism
$([\sigma, t], g)$ to the morphism $e^{2\pi i t}$ in $\mathbb{T}$.

Let $\alpha: \Lambda_n(X/\!\!/G)\longrightarrow\Lambda(X/\!\!/G)$ be the 
homomorphism of orbifolds sending an object $x\in X^g$ to $x\in
X^{g}$ and a morphism $[\sigma, t]: x\longrightarrow x'$ to
$[\sigma, nt]: x\longrightarrow x'$.   
Let $\beta:\Lambda_n(X/\!\!/G)\longrightarrow\Lambda(X/\!\!/G)$ be the
functor sending an object $x\in X^g$ to $x\in X^{g^n}$ and a
morphism $[\sigma, t]: x\longrightarrow x'$ to $[\sigma, t]:
x\longrightarrow x'$.  

Since we have the pullback square of groups (\ref{alphaklambdagroup}) and the
pushout square of groups (\ref{alphaklambdaring}), we have the pullback square 
of groupoids
\begin{equation}
\begin{CD}
\Lambda_n(X/\!\!/G)@>\alpha >> \Lambda(X/\!\!/G)\\
@VVV  @VVV\\
\mathbb{T} @>{e^{2\pi i t}\mapsto e^{2\pi n i t}}>> \mathbb{T},\\
\end{CD}
\label{pb3}\end{equation} 
and the pushout square in the category of $\Lambda$-rings

\begin{equation}
\begin{CD}
K^*_{orb}(\Lambda_n(X/\!\!/G))@<\alpha^* << K^*_{orb}(\Lambda(X/\!\!/G))\\
@AAA  @AAA\\
K^*_{orb}(*/\!\!/\mathbb{T}) @<<< K^*_{orb}(*/\!\!/\mathbb{T}).\\
\end{CD}
\label{po3}\end{equation}

It induces a natural isomorphism
\begin{equation}K^*_{orb}(\Lambda(X/\!\!/G))[q^{\pm\frac{1}{n}}]\buildrel\sim\over\longrightarrow
K^*_{orb}(\Lambda_n(X/\!\!/G)).\label{lambdaisoorb}\end{equation}

The $\Lambda$-ring homorphism $\mu^n$ can be constructed by
$$\mu^n: K^*_{orb}(\Lambda(X/\!\!/G))\buildrel
\beta^*\over\longrightarrow K^*_{orb}(\Lambda_n(X/\!\!/G))\cong
K_{orb}^*(\Lambda(X/\!\!/G))[q^{\pm\frac{1}{n}}].$$

\subsection{Properties}\label{propertiesqec} $QEll^*_G$ inherits
most properties of equivariant $K$-theory. In this section
we discuss some properties of $QEll^*_G$,
including the restriction map, the K\"{u}nneth map on it, its
tensor product and the change of group isomorphism.

Since each homomorphism $\phi: G\longrightarrow H$ induces a
well-defined homomorphism $\phi_{\Lambda}:
\Lambda_G(\tau)\longrightarrow\Lambda_H(\phi(\tau))$ for each
$\tau$ in $G^{tors}$, we can get the proposition below directly.
\begin{proposition}For each homomorphism $\phi: G\longrightarrow H$, it induces a ring map
$$\phi^*: QEll^*_H(X)\longrightarrow QEll^*_G(\phi^*X)$$ characterized by the commutative diagrams

\begin{equation}\begin{CD}QEll^*_H(X) @>{\phi^*}>> QEll^*_G(\phi^*X) \\ @V{\pi_{\phi(\tau)}}VV  @V{\pi_{\tau}}VV  \\
K^*_{\Lambda_H(\phi(\tau))}(X^{\phi(\tau)}) @>{\phi^*_{\Lambda}}>>
K^*_{\Lambda_G(\tau)}(X^{\phi(\tau)})\end{CD}\end{equation} for any $\tau \in G^{tors}$. So $QEll^*_G$ is functorial in $G$.
\label{restrictionq}\end{proposition}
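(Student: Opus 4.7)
The plan is to build $\phi^*$ factor by factor using the product decomposition $QEll^*_G(X) \cong \prod_{g \in G^{tors}_{conj}} K^*_{\Lambda_G(g)}(X^g)$ established earlier, and then verify that the assembled map is a ring homomorphism making the required square commute.

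First, for each $\tau \in G^{tors}$, I would construct the induced group homomorphism
$$\phi_\Lambda \colon \Lambda_G(\tau) \longrightarrow \Lambda_H(\phi(\tau)), \qquad [g, t] \mapsto [\phi(g), t].$$
Well-definedness reduces to two verifications: $\phi$ carries $C_G(\tau)$ into $C_H(\phi(\tau))$, which is immediate from $\phi(g\tau)=\phi(\tau g)$; and the assignment descends through the quotient relation $(g,t)\sim(\tau g, t-1)$, which is immediate from $\phi(\tau g)=\phi(\tau)\phi(g)$.

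Second, I would note the key identity $(\phi^*X)^\tau = X^{\phi(\tau)}$, which holds by the very definition of the pulled-back $G$-action $g\cdot x := \phi(g)\cdot x$ on $\phi^*X$. Pullback along $\phi_\Lambda$ then yields a ring map
$$\phi_\Lambda^* \colon K^*_{\Lambda_H(\phi(\tau))}(X^{\phi(\tau)}) \longrightarrow K^*_{\Lambda_G(\tau)}((\phi^*X)^\tau).$$
I would then assemble these into $\phi^*$ using the presentation $QEll^*_G(X) = \big(\prod_{g \in G^{tors}} K^*_{\Lambda_G(g)}(X^g)\big)^G$: define $(\phi^*\xi)_\tau := \phi_\Lambda^*(\xi_{\phi(\tau)})$. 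The output is $G$-invariant because $\phi$ intertwines the $G$-conjugation action on the indexing set $G^{tors}$ with the $H$-conjugation action on $H^{tors}$ (through $\phi(\tau)$), and pullback is compatible with these conjugation actions on both the group $\Lambda$ and the fixed-point space.

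Finally, multiplicativity is automatic: each component is pullback of equivariant $K$-theory along a group homomorphism of extended centralizers, hence a ring map, and a product of ring maps is a ring map. The commutative square of the proposition holds by construction, since $\pi_\tau\circ\phi^*$ is defined to be $\phi_\Lambda^*\circ \pi_{\phi(\tau)}$. The only mildly delicate step is the $G$-equivariance bookkeeping in the third paragraph; everything else is routine once $\phi_\Lambda$ is in hand. Functoriality in $G$ then follows from the evident identity $(\psi\circ\phi)_\Lambda = \psi_\Lambda\circ\phi_\Lambda$ on each factor.
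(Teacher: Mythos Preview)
Your proposal is correct and follows exactly the route the paper takes: the paper's entire argument is the single observation preceding the proposition that $\phi$ induces a well-defined homomorphism $\phi_\Lambda:\Lambda_G(\tau)\to\Lambda_H(\phi(\tau))$, from which the result is said to follow ``directly.'' You have simply unpacked that word ``directly'' into the explicit factor-by-factor construction, the identification $(\phi^*X)^\tau = X^{\phi(\tau)}$, and the $G$-invariance check, all of which are left implicit in the paper.
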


More generally, we have the restriction map below.
\begin{proposition}
For any groupoid homomorphism $\phi: X/\!\!/G\longrightarrow Y/\!\!/H$, we
have the groupoid homomorphism $\Lambda(\phi):
\Lambda(X/\!\!/G)\longrightarrow \Lambda(Y/\!\!/H)$ sending an object $(x,
g)$ to $(\phi(x), \phi(g))$, and a morphism $([\sigma, t], g)$ to
$([\phi(\sigma), t], \phi(g))$. Thus, we get a ring map
$$\phi^*: QEll^*(Y/\!\!/H)\longrightarrow QEll^*(X/\!\!/G)$$ characterized by the commutative diagrams

\begin{equation}\begin{CD}QEll^*(Y/\!\!/H) @>{\phi^*}>> QEll^*(X/\!\!/G) \\ @V{\pi_{\phi(\tau)}}VV  @V{\pi_{\tau}}VV  \\
K^*_{\Lambda_H(\phi(\tau))}(Y^{\phi(\tau)}) @>{\phi^*_{\Lambda}}>>
K^*_{\Lambda_G(\tau)}(X^{\tau})\end{CD}\end{equation}  for any $\tau \in G^{tors}$.
\label{functorialbothqell}
\end{proposition}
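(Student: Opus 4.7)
The plan is to first verify that the assignment $\Lambda(\phi)$ described in the statement is genuinely a well-defined groupoid homomorphism, then invoke the fact that orbifold $K$-theory is contravariantly functorial in groupoid homomorphisms to produce $\phi^*$, and finally unravel the projections $\pi_\tau$ to check that the stated square commutes.

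For the well-definedness of $\Lambda(\phi)$, the first step is to check that objects are sent to objects. Since $\phi: X/\!\!/G \to Y/\!\!/H$ is a groupoid homomorphism, it carries identity morphisms $g: x \to x$ with $g \in G^{tors}$ to $\phi(g): \phi(x) \to \phi(x)$ in $Y/\!\!/H$; because $\phi$ is a group homomorphism on stabilizers, $\phi(g)$ is torsion, so $\phi(x) \in Y^{\phi(g)}$ as required. Next I would check that the formula $([\sigma,t],g) \mapsto ([\phi(\sigma),t],\phi(g))$ respects the equivalence relation of Definition \ref{lambdak1}: under $\phi$ the relation $(\sigma,t) \sim (g\sigma,t-1) = (\sigma g',t-1)$ becomes $(\phi(\sigma),t) \sim (\phi(g)\phi(\sigma),t-1) = (\phi(\sigma)\phi(g'),t-1)$, which is exactly the relation defining $\Lambda_H(\phi(g),\phi(g'))$. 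Preservation of composition, identities, and the action on $X^g$ then follows immediately from the corresponding properties of $\phi$ together with the formula $[\sigma_1,t_1][\sigma_2,t_2] = [\sigma_1\sigma_2,t_1+t_2]$.

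Once $\Lambda(\phi)$ is in hand, the ring map $\phi^* := K^*_{orb}(\Lambda(\phi))$ drops out from contravariant functoriality of orbifold $K$-theory, and by definition this is the desired $\phi^*: QEll^*(Y/\!\!/H) \to QEll^*(X/\!\!/G)$. To verify the commutative square, I would use the description
\[
QEll^*(X/\!\!/G) \;\cong\; \prod_{g \in G^{tors}_{conj}} K^*_{\Lambda_G(g)}(X^g)
\]
from Section \ref{orbqec}, under which $\pi_\tau$ is literally the projection onto the factor indexed by the conjugacy class of $\tau$. The restriction of $\Lambda(\phi)$ to the component $X^\tau /\!\!/ \Lambda_G(\tau)$ factors through the component $Y^{\phi(\tau)} /\!\!/ \Lambda_H(\phi(\tau))$ via the pair $(\phi|_{X^\tau}, \phi_\Lambda)$, where $\phi_\Lambda$ is the homomorphism $\Lambda_G(\tau) \to \Lambda_H(\phi(\tau))$ induced exactly as in Proposition \ref{restrictionq}. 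Hence after applying $K^*_{orb}$ the square reduces to the compatibility of equivariant $K$-theory with the equivariant map $(X^\tau, \Lambda_G(\tau)) \to (Y^{\phi(\tau)}, \Lambda_H(\phi(\tau)))$, which is standard.

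The main obstacle is likely the bookkeeping around conjugacy classes: different torsion elements $\tau, \tau' \in G^{tors}$ may have images $\phi(\tau), \phi(\tau')$ that are $H$-conjugate even when $\tau,\tau'$ are not $G$-conjugate, so the factor-by-factor description only commutes with $\phi^*$ after one is careful to interpret both sides as $G$-invariants of the full product $\prod_{g \in G^{tors}} K^*_{\Lambda_G(g)}(X^g)$, exactly as in the second equality of the preceding proposition. Once that identification is made, the diagram for a single $\tau$ is what needs to be checked, and the verification reduces to the straightforward case already handled in Proposition \ref{restrictionq}.
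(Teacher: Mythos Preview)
Your proposal is correct and follows exactly the approach the paper intends. In fact the paper gives no explicit proof of this proposition at all: it is stated as a direct generalization of Proposition~\ref{restrictionq}, which in turn is asserted to follow ``directly'' from the fact that $\phi$ induces $\phi_\Lambda:\Lambda_G(\tau)\to\Lambda_H(\phi(\tau))$; your write-up simply unpacks those routine verifications (well-definedness of $\Lambda(\phi)$, contravariant functoriality of $K^*_{orb}$, and the componentwise check of the square) that the paper leaves implicit.
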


Moreover, we can define K\"{u}nneth map on quasi-elliptic cohomology
induced from that on equivariant $K$-theory.

Let $G$ and $H$ be two compact Lie groups. $X$ is a $G$-space and
$Y$ is a $H$-space. Let $\sigma\in G^{tors}$ and $\tau\in
H^{tors}$. Let
$\Lambda_G(\sigma)\times_{\mathbb{T}}\Lambda_H(\tau)$ denote the
fibered product of the morphisms
$$\Lambda_G(\sigma)\buildrel{\pi}\over\longrightarrow
\mathbb{T}\buildrel{\pi}\over\longleftarrow\Lambda_H(\tau).$$ It is
isomorphic to $\Lambda_{G\times H}(\sigma, \tau)$ under the
correspondence
$$([\alpha, t], [\beta, t])\mapsto [\alpha, \beta, t].$$

Consider the map below
\begin{align*}T: K_{\Lambda_G(\sigma)}(X^{\sigma})\otimes
K_{\Lambda_H(\tau)}(Y^{\tau})&\longrightarrow
K_{\Lambda_{G}(\sigma)\times\Lambda_{H}(\tau)}(X^{\sigma}\times
Y^{\tau})\buildrel{res}\over\longrightarrow
K_{\Lambda_{G}(\sigma)\times_{\mathbb{T}}\Lambda_{H}(\tau)}(X^{\sigma}\times
Y^{\tau})\\ &\buildrel{\cong}\over\longrightarrow
K_{\Lambda_{G\times H}(\sigma, \tau)}((X\times Y)^{(\sigma,
\tau)}).\end{align*} where the first map is the K\"{u}nneth map of
equivariant K-theory, the second is the restriction map  and the
third is the isomorphism induced by the group isomorphism
$\Lambda_{G\times H}(\sigma,
\tau)\cong\Lambda_G(\sigma)\times_{\mathbb{T}}\Lambda_H(\tau)$.

For $g\in G^{tors}$, let $1$ denote
the trivial line bundle over $X^g$ and let $q$ denote the line
bundle $1\odot q$ over $X^g$. The map $T$ above sends both
$1\otimes q$ and $q\otimes 1$  to $q$. So we get the well-defined
map
\begin{equation}K^*_{\Lambda_G(\sigma)}(X^{\sigma})\otimes_{\mathbb{Z}[q^{\pm}]}K^*_{\Lambda_H(\tau)}(Y^{\tau})\longrightarrow
K_{\Lambda_{G\times H}(\sigma, \tau)}((X\times
Y)^{(\sigma, \tau)}).\label{ku}\end{equation}

\begin{definition}The tensor produce of quasi-elliptic cohomology is defined by
\begin{equation}QEll^*_G(X)\widehat{\otimes}_{\mathbb{Z}[q^{\pm}]}QEll^*_H(Y)
\cong\prod_{\sigma\in G^{tors}_{conj}\mbox{,   } \tau\in
H^{tors}_{conj}}K^*_{\Lambda_G(\sigma)}(X^{\sigma})\otimes_{\mathbb{Z}[q^{\pm}]}K^*_{\Lambda_H(\tau)}(Y^{\tau}).\label{qectensor}\end{equation}
The direct product of the maps defined in (\ref{ku}) gives a ring
homomorphism
$$QEll^*_G(X)\widehat{\otimes}_{\mathbb{Z}[q^{\pm}]}QEll^*_H(Y)\longrightarrow
QEll^*_{G\times H}(X\times Y),$$ which is the K\"{u}nneth map of
quasi-elliptic cohomology.
\end{definition}

By Lemma \ref{cl} we have
$$QEll^*_G(\mbox{pt})\widehat{\otimes}_{\mathbb{Z}[q^{\pm}]}QEll^*_H(\mbox{pt})=QEll^*_{G\times H}(\mbox{pt}).$$
More generally, we have the proposition below.

\begin{proposition}
Let $X$ be a $G\times
H-$space with trivial $H-$action and let $\mbox{pt}$ be the single
point space with trivial $H-$action.
Then we have
$$QEll_{G\times H}(X)\cong QEll_G(X)\widehat{\otimes}_{\mathbb{Z}[q^{\pm}]} QEll_H(pt).$$

Especially, if $G$ acts trivially on $X$, we have
$$QEll_G(X)\cong   QEll(X)\widehat{\otimes}_{\mathbb{Z}[q^{\pm}]}
QEll_G(\mbox{pt}).$$ Here $QEll^*(X)$ is
$QEll^*_{\{e\}}(X)=K^*_{\mathbb{T}}(X)$.
\end{proposition}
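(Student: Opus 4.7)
The plan is to compute both sides as products over conjugacy classes of torsion elements and then match the factors. First I would use the defining decomposition
\[
QEll^*_{G\times H}(X) \;=\; \prod_{[(g,h)]} K^*_{\Lambda_{G\times H}(g,h)}\bigl(X^{(g,h)}\bigr),
\]
together with the evident bijection $(G\times H)^{tors}_{conj}\cong G^{tors}_{conj}\times H^{tors}_{conj}$, and the observation that $X^{(g,h)}=X^g$ when $H$ acts trivially on $X$. Under the group isomorphism $\Lambda_{G\times H}(g,h)\cong \Lambda_G(g)\times_{\mathbb{T}}\Lambda_H(h)$ already used in the K\"unneth map construction, the factor becomes $K^*_{\Lambda_G(g)\times_{\mathbb{T}}\Lambda_H(h)}(X^g)$, and the action of the fiber product on $X^g$ clearly factors through the projection onto $\Lambda_G(g)$.

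The core step is then to verify, for each fixed $(g,h)$, the K\"unneth-type isomorphism
\[
K^*_{\Lambda_G(g)\times_{\mathbb{T}}\Lambda_H(h)}(X^g)\;\cong\; K^*_{\Lambda_G(g)}(X^g)\otimes_{\mathbb{Z}[q^{\pm}]} R\Lambda_H(h).
\]
Writing $K=\Lambda_G(g)\times_{\mathbb{T}}\Lambda_H(h)$, $Q=\Lambda_G(g)$, we have a surjection $p\colon K\twoheadrightarrow Q$ (with kernel isomorphic to $C_H(h)$) through which $K$ acts on $X^g$. By Lemma \ref{cl}, $R\Lambda_H(h)$ is free over $R\mathbb{T}=\mathbb{Z}[q^{\pm}]$, so the representation-ring pushout gives
\[
R(K)\;\cong\; R\Lambda_G(g)\otimes_{R\mathbb{T}} R\Lambda_H(h),
\]
which is free (hence flat) as an $R(Q)$-module. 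One can then appeal to the standard change-of-groups formula $K^*_K(Y)\cong K^*_Q(Y)\otimes_{R(Q)} R(K)$ for a compact Lie group surjection $K\to Q$ with $R(K)$ flat over $R(Q)$, when $Q$ acts on the (compact) $Q$-CW complex $Y$; applied to $Y=X^g$ this yields the desired identification after tensoring back down along $R(Q)\to R(K)$ and collapsing to get $K^*_Q(X^g)\otimes_{R\mathbb{T}} R\Lambda_H(h)$.

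Taking the product over all pairs $([g],[h])$, the right-hand side becomes
\[
\prod_{[g],[h]} K^*_{\Lambda_G(g)}(X^g)\otimes_{\mathbb{Z}[q^{\pm}]} R\Lambda_H(h) \;=\; QEll^*_G(X)\,\widehat{\otimes}_{\mathbb{Z}[q^{\pm}]}\,QEll^*_H(\mathrm{pt}),
\]
by definition (\ref{qectensor}) and the identification $QEll^*_H(\mathrm{pt})\cong\prod_{[h]}R\Lambda_H(h)$ from (\ref{qecpt}). A diagram chase confirms that the resulting isomorphism is precisely the K\"unneth map (\ref{ku}) on each factor, proving the first statement. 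The special case follows by applying this with $G$ replaced by the trivial group $\{e\}$ and $H$ replaced by $G$, so that $QEll^*_{\{e\}}(X)=K^*_{\mathbb{T}}(X)=QEll^*(X)$.

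The main obstacle is justifying the flat-base-change step cleanly: one needs that on a compact $Q$-CW complex the equivariant K-theory is computed by cells and tensor products commute with the change-of-group along $p\colon K\to Q$ because $R(K)$ is free over $R(Q)$. This reduces (by Mayer--Vietoris on equivariant cells of the form $Q/L\times D^n$) to the point case, where it is simply the observation $R(K)=R(Q)\otimes_{R\mathbb{T}}R\Lambda_H(h)$ built into the pushout description above.
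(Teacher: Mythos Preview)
Your approach is essentially the same as the paper's: decompose $QEll_{G\times H}(X)$ as a product over $G^{tors}_{conj}\times H^{tors}_{conj}$, use $X^{(g,h)}=X^g$ and $\Lambda_{G\times H}(g,h)\cong\Lambda_G(g)\times_{\mathbb{T}}\Lambda_H(h)$, and then identify each factor with $K^*_{\Lambda_G(g)}(X^g)\otimes_{\mathbb{Z}[q^{\pm}]}R\Lambda_H(h)$. The paper records exactly this chain of isomorphisms with no further commentary, whereas you supply the missing justification for the key factor-level K\"unneth step via Lemma~\ref{cl} (freeness of $R\Lambda_H(h)$ over $R\mathbb{T}$) and a Mayer--Vietoris reduction to cells; this is a genuine improvement in detail over the paper's bare assertion.
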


\begin{proof}

\begin{align*}QEll_{G\times H}(X) &=\prod\limits_{\substack{g\in
G^{tors}_{conj}
\\ h\in H^{tors}_{conj}}}K_{\Lambda_{G\times H}(g, h)}(X^{(g,
h)})\cong \prod\limits_{\substack{g\in G^{tors}_{conj} \\ h\in
H^{tors}_{conj}} }K_{\Lambda_{G}(g)\times_{\mathbb{T}}
\Lambda_{H}(h)}(X^{g})\\ &\cong \prod\limits_{\substack{g\in
G^{tors}_{conj} \\ h\in H^{tors}_{conj}}}
K_{\Lambda_{G}(g)}(X^{g})\otimes_{\mathbb{Z}[q^{\pm}]}
K_{\Lambda_H(h)}(\mbox{pt})=
QEll_G(X)\widehat{\otimes}_{\mathbb{Z}[q^{\pm}]} QEll_H(pt).
\end{align*}

\end{proof}


\begin{proposition}
If $G$ acts freely on $X$,
$$QEll^*_G(X)\cong QEll^*_e(X/G).$$
\end{proposition}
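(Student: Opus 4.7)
The plan is to unwind the definition of $QEll^*_G(X)$ in terms of the fixed-point factor description, kill all but one factor using freeness, and identify the remaining factor with the claimed right-hand side.

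First I would recall that by the preceding proposition,
\begin{equation*}
QEll^*_G(X) \cong \prod_{g \in G^{tors}_{conj}} K^*_{\Lambda_G(g)}(X^g).
\end{equation*}
Because $G$ acts freely on $X$, we have $X^g = \emptyset$ for every $g \neq e$ in $G^{tors}$, so every factor in the product collapses except the one indexed by $g = e$. Thus the entire quasi-elliptic cohomology reduces to
\begin{equation*}
QEll^*_G(X) \cong K^*_{\Lambda_G(e)}(X).
\end{equation*}

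Next I would identify $\Lambda_G(e)$ explicitly. Since $C_G(e) = G$ and the defining relation $(e,t) \sim (e, t-1)$ just quotients the $\mathbb{R}$-factor by $\mathbb{Z}$, we obtain $\Lambda_G(e) \cong G \times \mathbb{T}$ as compact Lie groups, where $G$ acts on $X$ by the given action and $\mathbb{T}$ acts trivially. Hence
\begin{equation*}
K^*_{\Lambda_G(e)}(X) \cong K^*_{G \times \mathbb{T}}(X).
\end{equation*}

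Now I would invoke the classical fact from Segal's equivariant K-theory \cite{SegalequiK}: if a compact Lie group $G$ acts freely on $X$, then the quotient map $X \to X/G$ induces a natural isomorphism $K^*_G(X) \cong K^*(X/G)$, and more generally, for any group $H$ acting on $X$ in a way that commutes with the free $G$-action and descends to $X/G$, one has $K^*_{G \times H}(X) \cong K^*_H(X/G)$. Applying this with $H = \mathbb{T}$ acting trivially yields
\begin{equation*}
K^*_{G \times \mathbb{T}}(X) \cong K^*_{\mathbb{T}}(X/G) = QEll^*_e(X/G),
\end{equation*}
where the last equality uses $\Lambda_{\{e\}}(e) = \mathbb{T}$ and the convention $QEll^*(-) = QEll^*_{\{e\}}(-) = K^*_{\mathbb{T}}(-)$ stated in the previous proposition. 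Chaining these isomorphisms gives the result.

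The only genuine content is the free-action descent step for equivariant K-theory; the rest is bookkeeping through the definitions. I expect no serious obstacle, since the trivial $\mathbb{T}$-factor passes through the descent without issue, but care must be taken that the isomorphism is natural and compatible with the $\mathbb{Z}[q^{\pm}]$-algebra structure, which it is because $q$ corresponds on both sides to the tautological character of the (trivially acting) $\mathbb{T}$.
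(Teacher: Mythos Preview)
Your argument is correct and follows essentially the same route as the paper: reduce the product over conjugacy classes to the single factor at $g=e$ using freeness, then identify $\Lambda_G(e)\cong G\times\mathbb{T}$ and apply free-action descent in equivariant $K$-theory to obtain $K^*_{\mathbb{T}}(X/G)=QEll^*_e(X/G)$. The paper's proof compresses the descent step by writing the surviving factor directly as $K^*_{\Lambda_G(e)/C_G(e)}(X/C_G(e))=K^*_{\mathbb{T}}(X/G)$, but the substance is the same.
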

\begin{proof}
Since $G$ acts freely on $X$, $$X^{\sigma}=\begin{cases}\emptyset,
&\text{if $\sigma\neq e$;}\\ X, &\text{if
$\sigma=e$.}\end{cases}$$ Thus,
$QEll^*_G(X)\cong\prod\limits_{\sigma\in
G^{tors}_{conj}}K^*_{\Lambda_G(\sigma)/C_G(\sigma)}(X^{\sigma}/C_G(\sigma))\cong
K^*_{\mathbb{T}}(X/G).$

Since $\mathbb{T}$ acts trivially on $X$, we have
$K^*_{\mathbb{T}}(X/G)=QEll^*_e(X/G)$ by definition. And it is
isomorphic to $K^*(X/G)\otimes R\mathbb{T}$.\end{proof}


We also have the change-of-group isomorphism as  in
equivariant $K$-theory.

Let $H$ be a closed subgroup of $G$ and $X$ a $H$-space. Let
$\phi: H\longrightarrow G$ denote the inclusion homomorphism. The
change-of-group map $\rho^G_H: QEll^*_G(G\times_HX)\longrightarrow
QEll^*_H(X)$ is defined as the composite
\begin{equation}QEll^*_G(G\times_HX)\buildrel{\phi^*}\over\longrightarrow
QEll^*_H(G\times_H X)\buildrel{i^*}\over\longrightarrow
QEll_H^*(X)\label{changeofgroup}
\end{equation}
where $\phi^*$ is the restriction map and $i: X\longrightarrow
G\times_HX$ is the $H-$equivariant map defined by $i(x)=[e, x].$

\begin{proposition} The change-of-group map
$$\rho^G_H: QEll^*_G(G\times_H X)\longrightarrow
QEll^*_H(X)$$ defined in (\ref{changeofgroup}) is an
isomorphism.\end{proposition}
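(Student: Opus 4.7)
My plan is to reduce the statement to the classical change-of-group isomorphism in equivariant $K$-theory, applied componentwise to the product decomposition of $QEll^*$. Fix $g\in G^{tors}$. A class $[a,x]\in G\times_H X$ is fixed by $g$ precisely when $a^{-1}ga\in H$ and $x\in X^{a^{-1}ga}$; the $H$-conjugacy class of $\tau:=a^{-1}ga$ is an invariant of $[a,x]$, so $(G\times_H X)^g$ splits $C_G(g)$-equivariantly as a disjoint union indexed by those $H$-conjugacy classes $[\tau]_H$ in $H^{tors}$ for which $\tau$ is $G$-conjugate to $g$. Having fixed $a_\tau\in G$ with $a_\tau^{-1}ga_\tau=\tau$, the map $c\mapsto c\,a_\tau$ identifies the $\tau$-component with $C_G(g)\times_{C_H(\tau)}X^\tau$, where $C_H(\tau)$ embeds in $C_G(g)$ via $h\mapsto a_\tau h a_\tau^{-1}$.

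Next I would promote this identification to the $\Lambda$-equivariant level. The inclusion $C_H(\tau)\hookrightarrow C_G(g)$ above lifts canonically to $\Lambda_H(\tau)\hookrightarrow\Lambda_G(g)$, $[h,t]\mapsto [a_\tau h a_\tau^{-1},t]$, and since $g$ and $\tau$ act trivially on their own fixed points, a direct check shows
$$\Lambda_G(g)\times_{\Lambda_H(\tau)}X^\tau\;=\;C_G(g)\times_{C_H(\tau)}X^\tau$$
as $\Lambda_G(g)$-spaces (with trivial $\mathbb{T}$-action). The classical change-of-group isomorphism in $\Gamma$-equivariant $K$-theory, applied to $\Gamma=\Lambda_G(g)$ and its subgroup $\Lambda_H(\tau)$, then yields
$$K^*_{\Lambda_G(g)}\bigl((G\times_H X)^g_\tau\bigr)\;\cong\;K^*_{\Lambda_H(\tau)}(X^\tau).$$
Taking the product over all $[\tau]_H$ which cover a fixed $[g]_G$, and then over all $[g]_G\in G^{tors}_{conj}$, each $H$-conjugacy class appears exactly once, giving
$$QEll^*_G(G\times_H X)\;\cong\;\prod_{[\tau]_H\in H^{tors}_{conj}}K^*_{\Lambda_H(\tau)}(X^\tau)\;=\;QEll^*_H(X).$$

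Finally one has to verify that this abstract bijection coincides with the explicit map $\rho^G_H=i^*\circ\phi^*$. Unwinding the definitions in Proposition~\ref{restrictionq} and of $i^*$: the map $\phi^*$ re-indexes the product from $G$-conjugacy classes to $H$-conjugacy classes by restriction of $\Lambda$-representations, while $i^*$ picks out the $\tau'=\tau$ summand of the decomposition of $(G\times_H X)^\tau$, because $i(x)=[e,x]$ has $e^{-1}\tau e=\tau$. Thus the composite on the $[\tau]_H$-factor is exactly the change-of-group isomorphism above. I expect the primary obstacle to be the passage to the $\Lambda$-equivariant setting: one must confirm that the $\mathbb{T}$-factor intrinsic to $\Lambda_G(g)$ behaves correctly under the subgroup inclusion, which amounts to a pullback/pushout diagram of $\Lambda$-groups over $\mathbb{T}$ analogous to (\ref{pb}) and (\ref{po}). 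A cleaner structural route bypasses much of this bookkeeping: $(G\times_H X)/\!\!/G$ is Morita equivalent to $X/\!\!/H$, the assignment $\Lambda(-)$ preserves such equivalences (as in the proof of Lemma~\ref{orbskel}), and $K^*_{orb}$ is a Morita invariant; these three facts combine to give the isomorphism at once, leaving only the identification of the resulting zigzag with $\rho^G_H$.
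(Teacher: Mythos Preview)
Your proposal is correct and follows essentially the same approach as the paper: decompose $(G\times_H X)^g$ into pieces indexed by $H$-conjugacy classes $[\tau]_H$ with $\tau$ conjugate to $g$ in $G$, identify each piece with the induced space $\Lambda_G(g)\times_{\Lambda_H(\tau)}X^\tau$, apply the classical change-of-group isomorphism in equivariant $K$-theory, and then check that this coincides with $\rho^G_H=i^*\circ\phi^*$. The paper carries this out in two steps (first mapping $\Lambda_G(\tau)\times_{\Lambda_H(\tau)}X^\tau$ to $(G\times_H X)^\tau$, then conjugating by $g_\tau$ to $(G\times_H X)^\sigma$), whereas you build the conjugation into the subgroup inclusion $\Lambda_H(\tau)\hookrightarrow\Lambda_G(g)$ from the start; this is only a cosmetic difference. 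Your closing remark about the Morita-equivalence route is a genuine alternative the paper does not spell out, though it is implicit in the groupoid framework of Section~\ref{orbqec}.
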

\begin{proof}
For any $\tau\in H^{tors}_{conj}$, there exists a unique
$\sigma_{\tau}\in G_{conj}^{tors}$ such that
$\tau=g_{\tau}\sigma_{\tau}g_{\tau}^{-1}$ for some $g_{\tau}\in
G$.  Consider the maps \begin{equation}\begin{CD}
\Lambda_G(\tau)\times_{\Lambda_H(\tau)}X^{\tau}@>{[[a, t], x
]\mapsto [a, x]}>> (G\times_H X)^{\tau}@>{[u, x]\mapsto
[g_{\tau}^{-1}u, x]}>>
(G\times_HX)^{\sigma}.\end{CD}\end{equation} The first map is
$\Lambda_G(\tau)-$equivariant and the second is equivariant with
respect to the homomorphism $c_{g_{\tau}}:
\Lambda_{G}(\sigma)\longrightarrow \Lambda_G(\tau)$ sending $[u,
t]\mapsto [g_{\tau} u g_{\tau}^{-1}, t]$. Taking a coproduct over
all the elements $\tau\in H^{tors}_{conj}$ that are conjugate to
$\sigma\in G^{tors}_{conj}$ in $G$, we get an isomorphism
$$\gamma_{\sigma}: \coprod_{\tau}\Lambda_G(\tau)\times_{\Lambda_H(\tau)} X^{\tau}\longrightarrow
(G\times_HX)^{\sigma}$$ which is $\Lambda_G(\sigma)-$equivariant
with respect to $c_{g_{\tau}}$. Then we have the map
\begin{equation}\gamma:=\prod_{\sigma\in G^{tors}_{conj}}\gamma_{\sigma}:
\prod_{\sigma\in G^{tors}_{conj}}
K^*_{\Lambda_G(\sigma)}(G\times_HX)^{\sigma}\longrightarrow
\prod_{\sigma\in
G^{tors}_{conj}}K^*_{\Lambda_G(\sigma)}(\coprod_{\tau}\Lambda_G(\tau)\times_{\Lambda_H(\tau)}
X^{\tau})
\end{equation}

It's straightforward to check the change-of-group map coincide
with the composite \begin{align*} QEll^*_{G}(G\times_H
X)\buildrel{\gamma}\over\longrightarrow \prod_{\sigma\in
G^{tors}_{conj}}K^*_{\Lambda_G(\sigma)}(\coprod_{\tau}\Lambda_G(\tau)\times_{\Lambda_H(\tau)}
X^{\tau})\longrightarrow &\prod_{\tau\in
H^{tors}_{conj}}K^*_{\Lambda_H(\tau)}(X^{\tau})\\&=QEll^*_{H}(X)\end{align*}
with  the second map  the change-of-group isomorphism in
equivariant $K-$theory.
\end{proof}

\subsection{Formulas for Induction} \label{inductionqell} 


In this section \ref{inductionqell} we introduce the induction
formula for quasi-elliptic cohomology. The induction formula for Tate
K-theory is constructed in Section 2.3.3, \cite{Gan13}.

Let $H\subseteq G$ be an inclusion of compact Lie groups and $X$
be a $G-$space. Then we have the inclusion of the groupoids
$$j:X/\!\!/H\longrightarrow X/\!\!/G.$$

Let $a'=\prod\limits_{\sigma\in H^{tors}_{conj}}a'_{\sigma}$ be an
element in $QEll_H(X)=\prod\limits_{\sigma\in
H^{tors}_{conj}}K_{\Lambda_H(\sigma)}(X^{\sigma})$ where $\sigma$ goes
over all the conjugacy classes in $H$. The finite covering map
$$f': \Lambda(G\times_H X/\!\!/G)\longrightarrow  \Lambda(X/\!\!/G)$$  is defined by sending  an object $(\sigma, [g, x])$ to  $(\sigma, gx)$ and
a morphism  $([g', t], (\sigma, [g, x]))$ to $([g', t], (gx,
\sigma))$. The transfer of  quasi-elliptic cohomology
$$\mathcal{I}_H^G: QEll_H(X)\longrightarrow QEll_G(X)$$
is defined to be the composition \begin{equation}
QEll_H(X)\buildrel\cong\over\longrightarrow
QEll_G(G\times_HX)\longrightarrow
QEll_G(X)\label{qelltransfer}\end{equation} where the first map is
the change-of-group isomorphism and the second is the finite
covering.

Thus
$$\mathcal{I}^G_H(a')_{g}=\sum_{r}r\cdot a'_{r^{-1}gr}$$ where $r$ goes
over a set of representatives of $(G/H)^{g}$, in other words,
$r^{-1}gr$ goes over a set of representatives of conjugacy classes
in $H$ conjugate to $g$ in $G$.

\begin{equation}\mathcal{I}^G_H(a')_{g}=\begin{cases}Ind^{\Lambda_G}_{\Lambda_H}(a'_{g}) &\mbox{if  }g\mbox{ is conjuate to some element  }h \mbox{  in
}H;\\ 0 &\mbox{if there is no element conjugate to }g\mbox{  in
}H.
\end{cases}\end{equation}

\bigskip
There is another way to describe the transfer, which is shown in Rezk's unpublished work
\cite{Rez11} for quasi-elliptic cohomology. 
The transfer of Tate K-theory can be described similarly.

\section{Orbifold quasi-elliptic cohomology}\label{orbifoldquasibeforepower}

The elliptic cohomology of orbifolds involves a rich interaction
between the orbifold structure and the elliptic curve. Orbifold quasi-elliptic cohomology can also be constructed from loop spaces via bibundles.
We give the construction via bibundles in Section \ref{busb}.
Ganter explores this interaction in \cite{Gan13} in the case of the Tate
curve, describing $K_{Tate}$ for an orbifold $X$ in term of the
equivariant K-theory and the groupoid structure of $X$. We show the relation between orbifold quasi-elliptic cohomology
and orbifold Tate K-theory in
Section \ref{sb1}.

\subsection{Definition}\label{busb}

In this section we construct orbifold quasi-elliptic cohomology via loop space. The idea is similar to that in Section \ref{loopmodel}.
For the definition of groupoid action and groupoid-principal bundles, the readers can refer to Section 3, \cite{LerStack}.

Let $X$ be an orbifold groupoid.

\begin{definition}[$Loop_1(X)$]

We use
$Loop_1(X)$ to denote the category $Bibun(S^1/\!\!/\ast,
X)$, which generalizes Definition \ref{loopspacemorphism}.
According to Definition \ref{bibundle}, each object
consists of a smooth manifold $P$ and two structure maps
$P\buildrel{\pi}\over\longrightarrow S^1$ a smooth principal
$X-$bundle  and $f: P\longrightarrow X_0$ an $X-$equivariant map.  A morphism 
is an $X-$bundle map $\alpha: P\longrightarrow P'$ making the
diagram below commute.
$$\xymatrix{S^1
&P \ar[l]_{\pi}\ar[d]^{\alpha}\ar[r]^{f} & X_0 \\
&P' \ar[lu]^{\pi'}\ar[ru]_{f'} &}$$ Thus, the morphisms in
$Loop_1(X)$ from $P$ to $P'$ are $X-$isomorphisms.

\label{loopspacemorB}\end{definition}

Next we add rotations to the groupoid $Loop_1(X)$ and give the definition of the groupoid $Loop^{ext}_1(X)$ which generalizes Definition \ref{loopext3space}.

\begin{definition}[$Loop^{ext}_1(X)$]\label{loopextB} 
Let $Loop^{ext}_1(X)$ denote the groupoid with the same
objects as $Loop_1(X)$. Each morphism
consists of the pair $(t, \alpha)$ where $t\in\mathbb{T}$ is a
rotation and $\alpha$ is an $X-$bundle map. They make the diagram
below commute.
$$\xymatrix{S^1\ar[d]_{t}
&P \ar[l]_{\pi}\ar[d]_{\alpha}\ar[r]^{f} & X_0 \\S^1 &P'
\ar[l]^{\pi'}\ar[ru]_{f'} &}$$

\end{definition}

In addition, we can define the groupoid of ghost loops for orbifolds.
\begin{definition}[Ghost Loops]The ghost loops corresponds to the full subgroupoid
$GhLoop(X)$ of $Loop^{ext}_1(X)$ consisting of objects
$S^1\leftarrow P\buildrel{\widetilde{\delta}}\over\rightarrow X_0$
such that $\widetilde{\delta}(P)\subseteq X_0$ contained in a single
$G-$orbit. \label{ghostloopB} \end{definition}

The groupoid constant loops $\Lambda(X)$ is a subgroupoid of $GhLoop(X)$.
\begin{definition} The groupoid $\Lambda(X)$ is the subgroupoid of $Loop^{ext}_1(X)$ consisting of objects $S^1\leftarrow P\buildrel{\widetilde{\delta}}\over\rightarrow X_0$
such that there exists a section of $s_P: P\longrightarrow S^1$ such that $f\circ s_P$ is a constant map. Let $\{x_P\}$ denote the image of $f\circ s_P$. Each object is
determined by $x_P$ and an automorphism $g\in aut(x_P)$ in $X$ of finite order.

In each morphism $$\xymatrix{S^1\ar[d]_{t}
&P \ar[l]_{\pi}\ar[d]_{\alpha}\ar[r]^{f} & X_0 \\S^1 &P'
\ar[l]^{\pi'}\ar[ru]_{f'} &}$$
$\alpha \in Mor_X(x_P, x_{P'})$ and the morphism $(t+1, \alpha)$ is the same as $(t, \alpha\circ g)$.

When $X$ is a global quotient $M/\!\!/G$, $\Lambda(X)$ is isomorphic to the groupoid $\Lambda(M/\!\!/ G).$
\label{lambdaorb}
\end{definition}

\begin{definition}The orbifold quasi-elliptic cohomology of $X$ is defined to be \begin{equation}QEll^*(X):=K_{orb}^*(\Lambda(X)). \label{orbqelldef}\end{equation} \end{definition}
In the global quotient case,
$$QEll^*(M/\!\!/G)=QEll^*_G(M).$$

\subsection{Relation with Orbifold Tate K-theory}\label{sb1}

In this section we give another definition of orbifold quasi-elliptic cohomology equivalent to Definition \ref{orbqelldef}. It is closely related to Ganter's construction
of orbifold Tate K-theory in \cite{Gan13}.

First we recall some relevant
constructions and notations. The main reference is \cite{Gan13}.

Consider the category of groupoids $\mathcal{G}pd$ as a 2-category
with small topological groupoids as the objects and with
$$\mbox{1Hom}(X, Y)=Fun(X, Y),$$ 
the groupoid of continuous functors from $X$  to $Y$.

\begin{definition}The center of a groupoid $X$ is defined to be the group
$$\mbox{Center}(X):=\mbox{2Hom}(\mbox{Id}_X, \mbox{Id}_X)=\mathcal{N}at(\mbox{Id}_X, \mbox{Id}_X)$$ of natural
transformations from Id$_x$ to Id$_x$. \end{definition}

\begin{definition}Let $\mathcal{G}pd^{cen}$ denote the 2-category whose objects are
pairs $(X,\xi)$ with $\xi$ a center element of $X$, and the set of
morphisms from $(X, \xi)$ to $(Y, \nu)$ is $$\mbox{1Hom}((X, \xi),
(Y, \nu))\subset Fun(X, Y)$$ with $$f\xi=\nu f$$ for each morphism
$f$.
\end{definition}

We will assume all the center elements have finite order.

\begin{example}
If $G$ is a finite group, $\mbox{Center}(pt/\!\!/G)$ is the center of
the group $G$.

\end{example}

\begin{example}The Inertia groupoid $I(X)$  of a groupoid $X$, which is
defined in Definition \ref{inertiagroupoid}, is isomorphic to
$$Fun(\mbox{pt}/\!\!/\mathbb{Z}, X).$$ Each object of
$I(X)$ can be viewed as pairs $(x, g)$ with $x\in \mbox{ob}(X)$
and $g\in aut(x)$, $gx=x$. A morphism from $(x_1, g_1)$ to $(x_2,
g_2)$ is a morphism $h: x_1\longrightarrow x_2$ in $X$ satisfying
$h\circ g_1=g_2\circ h$ in $X$. So in $I(X)$,
$$\mbox{Hom}((x_1, g_1), (x_2, g_2))=\{h: x_1\longrightarrow x_2| h\circ g_1=g_2\circ
h\}.$$

Recall $I^{tors}(X)$ is  a full subgoupoid of $I(X)$ with elements
$(x, g)$ where $g$ is of finite order. Let $\xi^k$ denote the
center element of $I^{tors}(X)$ sending $(x, g)$ to $(x, g^k)$. We use $\xi$ to denote $\xi^1$.

For any $k\in \mathbb{Z}$, we have the 2-functor
\begin{align*}\mathcal{G}pd &\longrightarrow \mathcal{G}pd^{cen}\\
X&\mapsto (I^{tors}(X), \xi^k).
\end{align*}

\end{example}

\begin{example}In the global quotient case,   as indicated in Example
\ref{torsionquotient}, $I^{tors}(X/\!\!/G)$ is isomorphic to
$\prod\limits_{g\in G^{tors}_{conj}}X^g/\!\!/C_G(g)$. The center
element $\xi^k|_{X^g}=g^k$.

\end{example}

\begin{definition}Let $\mbox{pt}/\!\!/\mathbb{R}\times_{1\sim\xi}I^{tors}(X)$ denote the
groupoid
$$(\mbox{pt}/\!\!/\mathbb{R})\times I^{tors}(X)/\sim$$ with
$\sim$ generated by $1\sim\xi$.\end{definition}

\begin{lemma}The groupoid $\mbox{pt}/\!\!/\mathbb{R}\times_{1\sim\xi}I^{tors}(X)$  is isomorphic to $\Lambda(X)$.

Thus, $QEll^*(X)$ is isomorphic to
\begin{equation}K^*_{orb}(\mbox{pt}/\!\!/\mathbb{R}\times_{1\sim
\xi}I^{tors}(X)).\label{orbqecnora}\end{equation}\label{equivorbqell}\end{lemma}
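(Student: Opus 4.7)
The plan is to exhibit an explicit isomorphism of groupoids between the two sides by unraveling both definitions down to the same combinatorial data: a point $x\in X_0$, a torsion automorphism $g\in\mathrm{aut}(x)$, and an $\mathbb{R}$-parameter identified modulo $g$.

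First I would expand the description of $\mathrm{pt}/\!\!/\mathbb{R}\times_{1\sim\xi}I^{tors}(X)$. Since $\mathrm{pt}/\!\!/\mathbb{R}$ has a single object, the product groupoid has objects just the pairs $(x,g)$ with $g\in\mathrm{aut}(x)$ of finite order, and morphisms from $(x_1,g_1)$ to $(x_2,g_2)$ are pairs $(t,h)$ with $t\in\mathbb{R}$ and $h\colon x_1\to x_2$ satisfying $h g_1=g_2 h$. Imposing $1\sim\xi$ and evaluating the center element $\xi$ at the object $(x,g)$ gives $\xi_{(x,g)}=g$, so after composing with an arbitrary morphism $(t,h)$ the induced relation is $(t+1,h)\sim(t,h\circ g_1)=(t,g_2\circ h)$. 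On the other side, Definition \ref{lambdaorb} already describes the objects of $\Lambda(X)$ as pairs $(x_P,g)$ with $g\in\mathrm{aut}(x_P)$ of finite order, and the morphisms as pairs $(t,\alpha)$ subject to exactly the relation $(t+1,\alpha)=(t,\alpha\circ g)$.

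Next I would define a functor
\begin{equation*}
F\colon \mathrm{pt}/\!\!/\mathbb{R}\times_{1\sim\xi}I^{tors}(X)\longrightarrow \Lambda(X)
\end{equation*}
by sending an object $(x,g)$ to the same pair (regarded as an object of $\Lambda(X)$ via Definition \ref{lambdaorb}) and a morphism class $[t,h]$ to the morphism $[t,h]$ of $\Lambda(X)$. Well-definedness is immediate from the matching equivalence relations identified above, and functoriality reduces to the computation
\begin{equation*}
(t_1,h_1)\circ(t_2,h_2)=(t_1+t_2,\,h_1\circ h_2)
\end{equation*}
which holds on both sides using the compatibility $h g=g'h$. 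I would then construct the inverse functor in the opposite direction by the same formulas, using the characterization in Definition \ref{lambdaorb} to go from an object of $\Lambda(X)$ back to a pair $(x_P,g)$; the composite in either order is the identity, so $F$ is an isomorphism of groupoids.

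The main obstacle is a bookkeeping one: verifying that the relation generated by $1\sim\xi$ in the quotient really is the \emph{single} relation $(t+1,h)\sim(t,hg_1)=(t,g_2h)$ and not something coarser. This requires checking that the congruence generated in the morphism groupoid is exhausted by pre- and post-composition of $1\sim\xi$ with arbitrary morphisms, which is straightforward once one remembers that $\xi$ is central, so conjugation introduces no new identifications. With the groupoid isomorphism in hand, the final sentence follows at once: orbifold K-theory is invariant under equivalence of groupoids, so
\begin{equation*}
QEll^*(X)=K^*_{orb}(\Lambda(X))\cong K^*_{orb}(\mathrm{pt}/\!\!/\mathbb{R}\times_{1\sim\xi}I^{tors}(X)),
\end{equation*}
which is (\ref{orbqecnora}).
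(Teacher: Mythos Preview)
The paper does not supply its own proof of this lemma; it explicitly leaves the argument to the reader. Your approach---unraveling both groupoids to the common data of a pair $(x,g)$ with $g\in\mathrm{aut}(x)$ torsion, together with a morphism class $[t,h]$ subject to $hg_1=g_2h$ and $(t+1,h)\sim(t,hg_1)$---is precisely the intended verification and is correct. One small point worth making explicit: on the $\Lambda(X)$ side the intertwining condition $\alpha g=g'\alpha$ is not stated outright in Definition~\ref{lambdaorb}, but it is forced because $\Lambda(X)$ is declared a subgroupoid of $Loop_1^{ext}(X)$, whose morphisms are $X$-bundle maps; you use this compatibility when checking functoriality, so it would strengthen the write-up to say so.
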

The proof of Lemma \ref{equivorbqell} is left to the readers.



\begin{remark}
Orbifold quasi-elliptic cohomology $QEll(X)$ can be defined to be a
subring of  $K_{orb}(X)\llbracket q^{\pm\frac{1}{|\xi|}}
\rrbracket$ that is the Grothendieck group of finite sums
$$\sum_{a\in\mathbb{Q}} V_a q^a$$ satisfying: 
$$\mbox{for each }a\in\mathbb{Q} \mbox{, the coefficient }V_a \mbox{ is an }e^{2\pi ia}-\mbox{eigenbundle of }\xi.$$
\end{remark}



\end{document}